\providecommand{\U}[1]{\protect\rule{.1in}{.1in}}
\newtheorem{teor}{Theorem}
\newtheorem{prop}{Proposition}
\theoremstyle{definition}
\newtheorem*{rem}{Remark}
\renewcommand{\subjclassname}{AMS \textup{2010} Mathematics Subject
Classification\ }
\email{bayon@uniovi.es}
\email{fortunypedro@uniovi.es}
\email{grau@uniovi.es}
\email{oller@unizar.es}
\email{mruiz@uniovi.es}
\begin{document}
\author{L. Bayón}
\address{Departamento de Matemáticas, Universidad de Oviedo\\
Avda. Calvo Sotelo s/n, 33007 Oviedo, Spain}
\author{P. Fortuny Ayuso}
\address{Departamento de Matemáticas, Universidad de Oviedo\\
Avda. Calvo Sotelo s/n, 33007 Oviedo, Spain}
\title{The Best-or-Worst and the Postdoc problems}
\author{J.M. Grau}
\address{Departamento de Matemáticas, Universidad de Oviedo\\
Avda. Calvo Sotelo s/n, 33007 Oviedo, Spain}
\author{A. M. Oller-Marcén}
\address{Centro Universitario de la Defensa de Zaragoza\\
Ctra. Huesca s/n, 50090 Zaragoza, Spain}
\author{M.M. Ruiz}
\address{Departamento de Matemáticas, Universidad de Oviedo\\
Avda. Calvo Sotelo s/n, 33007 Oviedo, Spain}

\begin{abstract}
We consider two variants of the secretary problem, the\emph{ Best-or-Worst} and the \emph{Postdoc} problems, which are closely related. First, we prove that both variants, in their standard form with binary
payoff 1 or 0, share the same optimal stopping rule. We also consider additional
cost/perquisites depending on the number of interviewed candidates. In these situations the optimal strategies are very different. Finally, we also focus on the Best-or-Worst variant with different payments depending on whether the selected candidate is the best or the worst.

\end{abstract}
\maketitle
\keywords{Keywords: Secretary problem, Combinatorial Optimization}

\subjclassname{60G40, 62L15}

\section{Introduction}

The \emph{secretary problem} is one of many names for a famous problem of optimal
stopping theory. This problem can be stated as follows: an
employer is willing to hire the best secretary out of $n$ rankable candidates. These candidates are interviewed one by one in random order. A
decision about each particular candidate is to be made immediately after the
interview. Once rejected, a candidate cannot be called back. During the
interview, the employer can rank the candidate among all the preceding ones, but he is unaware of the quality of yet unseen candidates.
The goal is then to determine the optimal strategy that maximizes the
probability of selecting the best candidate.

This problem has a very elegant solution. Dynkin \cite{48} and Lindley
\cite{101} independently proved that the best strategy consists in a so-called threshold strategy. Namely, in rejecting
roughly the first $n/e$ (cutoff value) interviewed candidates and then choosing the first one that is better
than all the preceding ones. Following this strategy, the
probability of selecting the best candidate is at least $1/e$, this being its approximate value for large
values of $n$. This well-known solution was later refined by Gilbert and Mosteller \cite{gil} showing that $\left\lfloor (n-\frac{1}{2})e^{-1}+\frac{1}%
{2}\right\rfloor$ is a better approximation than $\lfloor n/e\rfloor$, although the difference
is never greater than 1.

This secretary problem has been addressed by many authors in different fields such as applied probability, statistics or decision theory. In \cite{FER}, \cite{FER2} or \cite{2009} extensive
bibliographies on the topic can be found. On the other hand, different generalizations of this classical problem have been recently considered in the framework of partially ordered objects \cite{poset2,garrod,poset1} or matroids \cite{1,soto}. It is also worth mentioning the work of Bearden \cite{KK}, where the author considers a situation where the employer receives a payoff for selecting a candidate equal to the ``score'' of the candidate (in the classical problem the payoff is 1 if the candidate is really the best and 0 otherwise). In this situation, the optimal cutoff value is roughly the square root of the number of candidates.

In this paper we focus on two closely related variants of the secretary problem. The so-called  \emph{Best-or-Worst} and \emph{Postdoc} variants. In the Best-or-Worst variant, the classic secretary problem is modified so that the goal is to select either the best or the worst candidate, indifferent between the two cases. This variant can only be found on \cite{fergu} as a multicriteria problem in the perfect negative dependence case. Here we present it in greater detail. In the Postdoc variant, instead of selecting the best candidate, the goal is to select the second best candidate. This problem was proposed to Robert J. Vanderbei by Eugene Dynkin in 1980 with the following motivating story that explains the name of the problem: we are trying to hire a
postdoc, since the best candidate will receive (and accept) an offer from Harvard, we are interested in hiring the second best candidate. Vanderbei himself solved the problem in 1983 using dynamic programming \cite{posdoc}. However, he never published his work because he learned that Rose had already published his own solution using different techniques \cite{rose}. Moreover, Szajowski had already solved the problem of picking the $k$-th better candidate for $2\leq k\leq 5$ \cite{aesima}.

In the present paper, for these two variants, we study the standard problem (binary payoff function 1 or 0), showing that both have the same optimal
cutoff rule strategy and also the problems considering payoff functions that depend on the number of performed interviews, showing that in this case they have very different optimal strategies.

The paper is organized as follows: in Section 2, we present some technical results, in Section 3, we revisit the classic secretary problem and also solve two new situations with payoff functions that depend on the number of performed interviews. In Section 4 we focus on the Best-or-Worst variant, solving the problem for three different payoff functions and also presenting a variant in which the choice of the best or the worst candidate is no longer indifferent. In Section 5 we solve the three versions of the Postdoc variant and, finally, we compare the obtained results in Section 6.

\section{Two technical results}

The following result can be widely applied in different optimal stopping problems and it will be extensively used throughout the paper. For a sequence of continuous real functions $\{F_{n}\}_{n\in\mathbb{N}}$ defined on a closed interval, it determines the asymptotic behavior of the sequence $\{\mathcal{M}(n)\}_{n\in\mathbb{N}}$, where $\mathcal{M}(n)$ is the value for which the function $F_n$ reaches its maximum.

\begin{prop}\label{conv}
Let $\{F_{n}\}$ be a sequence of real functions with $F_n\in\mathcal{C}[0,n]$ and let $\mathcal{M}(n)$ be the value for which the function $F_n$ reaches its maximum. Assume that the sequence of functions $\{g_{n}\}_{n\in\mathbb{N}}$ given by $g_{n}(x):=F_{n}(nx)$ converges uniformly on
$[0,1]$ to a function $g$ and that $\theta$ is the only global maximum of $g$ in $[0,1]$. Then,
\begin{itemize}
\item[i)] $\displaystyle\lim_{n} \mathcal{M}(n)/n =\theta$.
\item[ii)] $\displaystyle\lim_{n} F_{n}(\mathcal{M}(n))= g(\theta)$.
\item[iii)] If $\mathfrak{M}(n)\sim\mathcal{M}(n)$ then $\displaystyle\lim_{n}F_{n}(\mathfrak{M}(n))=g(\theta)$.
\end{itemize}
\end{prop}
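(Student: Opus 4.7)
The plan is to reduce everything to a statement about the rescaled functions $g_n$, whose joint behavior we control via the uniform convergence hypothesis. The key observation is that since $\mathcal{M}(n)$ maximizes $F_n$ on $[0,n]$, the point $x_n := \mathcal{M}(n)/n$ maximizes $g_n(x) = F_n(nx)$ on $[0,1]$. So part (i) becomes the assertion $x_n \to \theta$, and parts (ii) and (iii) become statements about evaluating $g_n$ at sequences of points converging to $\theta$.

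For (i), I would argue by contradiction. Suppose $x_n \not\to \theta$. Since $(x_n) \subset [0,1]$ is bounded, some subsequence $x_{n_k}$ converges to a limit $x^\ast \in [0,1]$ with $x^\ast \neq \theta$. By the maximizing property, $g_{n_k}(x_{n_k}) \geq g_{n_k}(\theta)$ for every $k$. Uniform convergence of $g_n$ to $g$ plus continuity of $g$ gives
\[
g_{n_k}(x_{n_k}) = g(x_{n_k}) + \bigl(g_{n_k}(x_{n_k}) - g(x_{n_k})\bigr) \longrightarrow g(x^\ast),
\]
while $g_{n_k}(\theta) \to g(\theta)$ directly by uniform convergence. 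Passing to the limit in the inequality yields $g(x^\ast) \geq g(\theta)$, which contradicts the assumption that $\theta$ is the unique global maximum of $g$ on $[0,1]$. Hence $x_n \to \theta$, proving (i).

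For (ii), once (i) is known, we have $F_n(\mathcal{M}(n)) = g_n(x_n)$ with $x_n \to \theta$. The same decomposition
\[
g_n(x_n) - g(\theta) = \bigl(g_n(x_n) - g(x_n)\bigr) + \bigl(g(x_n) - g(\theta)\bigr),
\]
with the first term bounded by $\|g_n - g\|_\infty$ and the second tending to $0$ by continuity of $g$, shows $F_n(\mathcal{M}(n)) \to g(\theta)$. For (iii), the asymptotic equivalence $\mathfrak{M}(n) \sim \mathcal{M}(n)$ combined with (i) gives
\[
\frac{\mathfrak{M}(n)}{n} = \frac{\mathfrak{M}(n)}{\mathcal{M}(n)} \cdot \frac{\mathcal{M}(n)}{n} \longrightarrow 1 \cdot \theta = \theta,
\]
so $F_n(\mathfrak{M}(n)) = g_n(\mathfrak{M}(n)/n)$ converges to $g(\theta)$ by exactly the same argument used for (ii).

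The only delicate point is really the subsequence/contradiction step in (i): one must be careful that the convergence $g_{n_k}(x_{n_k}) \to g(x^\ast)$ uses both uniform convergence (to replace $g_{n_k}$ by $g$) and continuity of $g$ (to pass the limit inside). No compactness beyond the boundedness of $[0,1]$ is required, and no regularity of $F_n$ beyond continuity is used. I expect no other genuine obstacle; the remaining work is the routine triangle-inequality bookkeeping sketched above.
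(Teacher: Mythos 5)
Your proposal is correct and follows essentially the same route as the paper: rescale so that $\mathcal{M}(n)/n$ maximizes $g_n$ on the compact set $[0,1]$, use a Bolzano--Weierstrass subsequence argument together with uniform convergence (and continuity of $g$) to force any subsequential limit to be a global maximizer of $g$, and conclude from uniqueness of $\theta$; parts (ii) and (iii) then follow by the same limit-passing. The only difference is cosmetic: you phrase (i) as a proof by contradiction and spell out the triangle-inequality decomposition, whereas the paper argues directly that every convergent subsequence tends to $\theta$ -- these are the same argument.
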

\begin{proof}
\begin{itemize}
\item[i)]
Let us consider the sequence $\{\mathcal{M}(n)/n\}\subset [0,1]$ and assume that $\{\mathcal{M}(s_{n})/s_{n}\}$ is a subsequence that converges to $\alpha$.
Then,
$$g_{s_n}(\theta)=F_{s_n}(s_n\theta)\leq F_{s_{n}}(\mathcal{M}(s_{n}))=F_{s_{n}}\left(\frac{\mathcal{M}(s_{n})}{s_n}s_n\right)=g_{s_n}\left(\frac{\mathcal{M}(s_n)}{s_n}\right).$$
Consequently, since $g_n\to g$ uniformly on $[0,1]$, if we take limits we get
$$g(\theta)=\lim_n g_{s_n}(\theta)\leq\lim_n g_{s_n}\left(\frac{\mathcal{M}(s_n)}{s_n}\right)=g(\alpha)$$
and since $\theta$ is the only global maximum of $g$, it follows that $\theta=\alpha$.

Thus, we have proved that every convergent subsequence of $\{\mathcal{M}(n)/n\}$ converges to the same limit $\theta$. Since $\{\mathcal{M}(n)/n\}$ is defined on a compact set this implies that $\{\mathcal{M}(n)/n\}$ itself must also converge to $\theta$.
\item[ii)] It is enough to observe that
$$\lim_{n} F_{n}(\mathcal{M}(n))=\lim_{n}F_{n}\left(\frac{\mathcal{M}(n)}{n} n\right)=\lim_n g_n\left(\frac{\mathcal{M}(n)}{n}\right)=g(\theta),$$
where the last equality holds because $g_n\to g$ uniformly on $[0,1]$.
\item[iii)] If $\mathfrak{M}(n)\sim\mathcal{M}(n)$, then it also holds that $\displaystyle \lim_n \frac{\mathfrak{M}(n)}{n}=\theta$ and we can reason as in the previous point.
\end{itemize}
\end{proof}

\begin{rem}
The condition of uniform convergence is required to ensure, for instance, that $\displaystyle \lim_n g_{s_n}\left(\frac{\mathcal{M}(s_n)}{s_n}\right)=g(\alpha)$. In fact, it is easy to give counterexamples to Proposition \ref{conv} if convergence is not uniform.
\end{rem}

Observe that Proposition \ref{conv} implies that that $\displaystyle \lim_n F_{n}(n \theta)=g(\theta)$. Moreover, it also implies that $\displaystyle \lim_n F_{n}(n \theta+o(n))=g(\theta)$. This means that $n\theta$ is a good estimate for $\mathcal{M}(n)$ and that, for large values of $n$, the maximum value of $F_n$ approaches $g(\theta)$.

Proposition \ref{conv} admits the following two-variable version that can be proved in the same way.

\begin{prop}\label{conv2}
Let $\{G_{n}\}$ be a sequence of two variable real functions with $G_{n}\in\mathcal{C}\big(\{(x,y)\in\lbrack0,n]^{2}:x\leq y\}\big)$ and let
$(\mathcal{M}_{1}(n),\mathcal{M}_{2}(n))$ be a point for which $G_{n}$ reaches its maximum. Assume that the sequence $\{h_n\}_{n\in\mathbb{N}}$ given by $h_{n}(x,y):=G_{n}(nx,ny)$ converges uniformly on $T:=\{(x,y)\in\mathbb{R}^2:0\leq x\leq y\leq 1\}$ to a function $h$ and that $(\theta_1,\theta_2)$ is the only global maximum of $h$ in $T$. Then,
\begin{itemize}
\item[i)] $\displaystyle\lim_{n} \mathcal{M}_{i}(n)/n =\theta_{i}$ for $i=1,2$.
\item[ii)] $\displaystyle\lim_{n} G_{n}(\mathcal{M}_{1}(n),\mathcal{M}_{2}(n))=
h(\theta_{1},\theta_{2}).$
\item[iii)] If $\mathfrak{M}_{i}(n)\sim\mathcal{M}_{i}(n)$ for $i=1,2$, then $\displaystyle\lim_{n} G_{n}( \mathfrak{M}_{1}(n),\mathfrak{M}_{2}(n))=h(\theta_{1},\theta_{2})$.
\end{itemize}
\end{prop}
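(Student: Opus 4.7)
The plan is to transcribe the one-variable argument from Proposition \ref{conv}, replacing the compact interval $[0,1]$ by the compact triangle $T$ and the real-valued rescaled sequence by an $\mathbb{R}^2$-valued one. The feasibility constraint $\mathcal{M}_1(n)\leq \mathcal{M}_2(n)$ built into the domain of $G_n$ ensures that the pairs $(\mathcal{M}_1(n)/n,\mathcal{M}_2(n)/n)$ lie in $T$, so the Bolzano-Weierstrass argument transfers directly.

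For (i), I would fix an arbitrary convergent subsequence $\{(\mathcal{M}_1(s_n)/s_n,\mathcal{M}_2(s_n)/s_n)\}$ with limit $(\alpha_1,\alpha_2)$, which lies in $T$ because $T$ is closed. Using that $(\theta_1,\theta_2)\in T$ (so $(s_n\theta_1,s_n\theta_2)$ is admissible for $G_{s_n}$) together with the maximizing property of $(\mathcal{M}_1(s_n),\mathcal{M}_2(s_n))$, I obtain the chain
$$h_{s_n}(\theta_1,\theta_2)=G_{s_n}(s_n\theta_1,s_n\theta_2)\leq G_{s_n}(\mathcal{M}_1(s_n),\mathcal{M}_2(s_n))=h_{s_n}\!\left(\frac{\mathcal{M}_1(s_n)}{s_n},\frac{\mathcal{M}_2(s_n)}{s_n}\right).$$
Taking limits and invoking uniform convergence on $T$ yields $h(\theta_1,\theta_2)\leq h(\alpha_1,\alpha_2)$, and uniqueness of the global maximum forces $(\alpha_1,\alpha_2)=(\theta_1,\theta_2)$. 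Since every convergent subsequence of the bounded sequence $\{(\mathcal{M}_1(n)/n,\mathcal{M}_2(n)/n)\}\subset T$ has the same limit, the whole sequence converges to $(\theta_1,\theta_2)$.

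Parts (ii) and (iii) are then immediate by writing $G_n(\mathcal{M}_1(n),\mathcal{M}_2(n))=h_n(\mathcal{M}_1(n)/n,\mathcal{M}_2(n)/n)$ and using uniform convergence together with (i); the analogous identity with $\mathfrak{M}_i$ in place of $\mathcal{M}_i$ handles (iii) once we observe that $\mathfrak{M}_i(n)\sim \mathcal{M}_i(n)$ implies $\mathfrak{M}_i(n)/n\to\theta_i$ for $i=1,2$. The only conceptual point that needs adaptation from the one-variable proof, and the one mild obstacle, is the careful choice of the compact set $T$ (rather than $[0,1]^2$), which reflects the geometric constraint $x\leq y$ inherent to the domain of $G_n$ and is needed to guarantee both that the rescaled sequence lives in a compact set and that the comparison point $(\theta_1,\theta_2)$ is feasible at every scale $s_n$. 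Everything else is a word-for-word two-variable translation of the earlier argument.
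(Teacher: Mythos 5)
Your proposal is correct and is precisely what the paper intends: the paper gives no separate argument for Proposition \ref{conv2}, stating only that it ``can be proved in the same way'' as Proposition \ref{conv}, and your transcription—compactness of $T$, the subsequence/uniqueness-of-maximum argument, and uniform convergence for parts (ii) and (iii)—is exactly that one-variable proof carried over verbatim.
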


\section{A new look at the classic secretary problem}

In the classical secretary problem, let $n$ be the number of candidates and let us consider a cutoff value $r\in(1,n)$. If $k\in (r,n]$ is an integer, the probability of successfully selecting the best candidate in the $k$-th interview is $\displaystyle P_{n,r}(k)=\frac{r}{n}\frac{1}{k-1}$. Thus, the probability function of succeeding in the classical secretary problem with $n$ candidates using $r$ as cutoff value, is given by
$$F_{n}(r):=\sum_{k=r+1}^{n}P_{n,r}(k)=\frac{r}{n}\sum_{k=r+1}^{n}\frac{1}{k-1}.$$

The goal is now to determine the value of $r$ that maximizes this probability (i.e., to determine the optimal cutoff value) and to compute this maximum probability. This can be done using Proposition \ref{conv} in the following way. First, we extend $F_{n}$ to a real variable function by
$$F_{n}(r)=\frac{r}{n}(\psi(n)-\psi(r)),$$
where $\psi$ is the so-called digamma function. Then, it can be seen with little effort that the sequence of
functions $\{g_n\}$ defined by $g_{n}(x):=F_{n}(nx)$ converges uniformly on $[0,1]$ to the function
$g(x):=-x\log(x)$ and the remaining is just some elementary calculus.

\begin{rem}
In \cite{FER} the following rather lax reasoning showing that $\mathcal{M}(n)/n$ tends to $1/e$ is given. If we let $n$ tend to infinity and write $x$ as the limit of $r/n$, then
using $t$ for $j/n$ and $dt$ for $1/n$, the sum becomes a Riemann approximation to an integral
$$F_{n}(r) \rightarrow x \int_{x}^{1} \frac{dt}{t}= - x \log(x).$$
Proposition 1 provides a more rigorous approach.
\end{rem}

We introduce a more general situation. Let $p:\mathbb{R}\to[0,+\infty)$ be a function (payoff function) and assume that a payoff of $p(k)$ is received if the $k$-th candidate is selected. In this setting, the expected payoff is
$$E_n(r):=\sum_{k=r+1}^n p(k)P_{n,r}(k)=\frac{r}{n}\sum_{k=r+1}^{n}\frac{p(k)}{k-1}.$$
Note that in the classical situation
\begin{equation}\label{pobin}
p_B(k)=\begin{cases} 1, & \textrm{if the $k$-th candidate is the seeked candidate};\\ 0, & \textrm{otherwise}.\end{cases}
\end{equation}
and the expected payoff coincides with the probability of successfully selecting the best candidate.

Now, let us modify the classical situation considering that performing each interview has a constant cost of $1/n$. Clearly, in this situation the payoff function is given by
\begin{equation}\label{pocost}
p_C(k)=\begin{cases} 1-k/n, & \textrm{if the $k$-th candidate is the seeked candidate};\\ 0, & \textrm{otherwise}.\end{cases}
\end{equation}
and the expected payoff is
$$E^{C}_{n}(r):=\frac{r}{n}\sum_{k=r+1}^{n}\frac{1-\frac{k}{n}}{k-1}.$$
The following result provides the optimal cutoff value and the maximum expected payoff in this setting. In what follows, we denote by $W$ the main branch of the so-called Lambert-$W$ function, defined by $z=W(ze^z)$.

\begin{prop}
Given an integer $n>1$, let us consider the function
$$E^{C}_{n}(r):=\frac{r}{n}\sum_{k=r+1}^{n}\frac{1-\frac{k}{n}}{k-1}$$
defined for every integer $1\leq r\leq n-1$ and let $\mathcal{M}(n)$ be the value for which the function $E^{C}_{n}$ reaches its
maximum. Then,
\begin{itemize}
\item[i)] $\displaystyle\lim_{n} {\mathcal{M}(n)}/{n}=\rho:= -\frac{1}{2}W(-2 e^{-2}) =0.20318\dots$.
\item[ii)] $\displaystyle \lim_{n}E^{C}_{n}(\mathcal{M}(n))=\displaystyle \lim_{n}E^{C}_{n}(\lfloor \rho n\rfloor)=\rho(1-\rho)= 0.16190\dots$.
\end{itemize}
\end{prop}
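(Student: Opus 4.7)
The plan is to apply Proposition \ref{conv} to the sequence $\{E^{C}_n\}$. First I would rewrite $E^{C}_n(r)$ in closed form so that it extends to a continuous function on $[0,n]$, then identify the uniform limit $g$ of $g_n(x):=E^{C}_n(nx)$, locate its unique global maximum $\rho$ on $[0,1]$, and finally read off (i) and (ii) from Proposition \ref{conv} parts (i), (ii), with part (iii) delivering the statement about $\lfloor \rho n\rfloor$.

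For the closed form, I would split the summand as $\frac{1-k/n}{k-1}=\frac{n-1}{n(k-1)}-\frac{1}{n}$ and use $\sum_{k=r+1}^{n}\frac{1}{k-1}=\psi(n)-\psi(r)$ to obtain
\[
E^{C}_n(r)=\frac{r(n-1)}{n^{2}}\bigl(\psi(n)-\psi(r)\bigr)-\frac{r(n-r)}{n^{2}},
\]
which is continuous on $(0,n]$ and extends continuously to $r=0$ because $r\psi(r)\to -1$ as $r\to 0^{+}$. Substituting $r=nx$ and invoking the standard digamma asymptotic $\psi(n)-\psi(nx)=-\log x+O\bigl(1/(nx)\bigr)$ identifies the pointwise limit
\[
g(x):=-x\log x-x(1-x),
\]
with $g(0)=0$ under the usual convention $x\log x\to 0$.

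For the maximum, differentiating gives $g'(x)=-\log x-2+2x$, so any interior critical point satisfies $\log x=2x-2$, equivalently $-2x\,e^{-2x}=-2e^{-2}$. Solving via the Lambert $W$ function yields $x=\rho:=-\tfrac{1}{2}W(-2e^{-2})\approx 0.20318$. Since $g''(x)=-1/x+2$ is strictly negative at $x=\rho<1/2$, and $g(0)=g(1)=0$, this is the unique global maximum of $g$ on $[0,1]$. Moreover, from $\log\rho=2\rho-2$ we get $-\rho\log\rho=2\rho(1-\rho)$, hence $g(\rho)=2\rho(1-\rho)-\rho(1-\rho)=\rho(1-\rho)\approx 0.16190$.

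The main technical obstacle is the uniform convergence $g_n\to g$ on $[0,1]$, since both the digamma asymptotics and the $x\log x$ term are singular at $x=0$. Away from the origin the $O(1/(nx))$ error in the digamma expansion is absorbed by the prefactor $x$, giving a $O(1/n)$ bound uniform on any interval $[\varepsilon,1]$. Near $x=0$ I would bound $g_n$ and $g$ separately by the common majorant $O\bigl(x(|\log x|+1)\bigr)$, so that both go to $0$ uniformly as $x\to 0^{+}$, and combine the two estimates. Once uniform convergence is established, Proposition \ref{conv}(i),(ii) yields $\mathcal{M}(n)/n\to\rho$ and $E^{C}_n(\mathcal{M}(n))\to\rho(1-\rho)$; since $\lfloor\rho n\rfloor\sim\rho n\sim\mathcal{M}(n)$, part (iii) then gives $E^{C}_n(\lfloor\rho n\rfloor)\to\rho(1-\rho)$, completing the proof.
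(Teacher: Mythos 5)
Your proposal is correct and follows essentially the same route as the paper: extend $E^{C}_{n}$ to a real-variable function via the digamma function, show that $g_n(x):=E^{C}_{n}(nx)$ converges uniformly on $[0,1]$ to $g(x)=x(-1+x-\log x)$, and apply Proposition~\ref{conv}. The only difference is that you spell out the Lambert-$W$ critical-point computation and sketch the uniform-convergence estimate, both of which the paper compresses into ``it can be seen'' and ``straightforward computations.''
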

\begin{proof}
First, we extend $E^{C}_{n}$ to a real variable function by
$$E^C_{n}(r)=\frac{r\,\left(  -n+r+\left(  -1+n\right)  \,\psi(n)-\left(-1+n\right)  \,\psi(r)\right)  }{n^{2}}.$$
Now, it can be seen that $g_{n}(x):=E^{C}_{n}(nx)$ converges uniformly in $[0,1]$ to $g(x):=x\left(-1+x-\log(x)\right)$.
To conclude the proof it is enough to apply Proposition \ref{conv} together with some straightforward computations.
\end{proof}

This result means that the optimal strategy in this setting consists in rejecting roughly the first $\rho n$ interviewed candidates and then accepting the first candidate which is better than all the preceding ones. Following this strategy, the maximum expected payoff is asymptotically equal to $\rho^{2}-\rho$.

\begin{rem}
The constant $\rho=-\frac{1}{2}W(-2e^{-2})=0.20318786\dots$ (A106533
in OEIS) appears in \cite{FER2} (erroneously approximated as 0.20388) in the context of the Best-Choice Duration
Problem considering a payoff of $(n-k+1)/n$. Furthermore, as a noteworthy curiosity, it should be pointed out that this constant has
appeared in a completely different context from the one addressed here (the Daley-Kendall model) and it is known as the
\emph{rumour's constant} \cite{RUMOR,ru}.
\end{rem}

Now, let us consider that performing each interview has an perquisite of $1/n$. Clearly, in this situation the payoff function is given by
\begin{equation}\label{popay}
p_P(k)=\begin{cases} 1+k/n, & \textrm{if the $k$-th candidate is the seeked candidate};\\ 0, & \textrm{otherwise}.\end{cases}
\end{equation}
and the expected payoff is
$$E^{P}_{n}(r):=\frac{r}{n}\sum_{k=r+1}^{n}\frac{1+\frac{k}{n}}{k-1}.$$
The following result provides the optimal cutoff value and the maximum expected payoff in this setting.

\begin{prop}
Given an integer $n>1$, let us consider the function
$$E^{P}_{n}(r):=\frac{r}{n}\sum_{k=r+1}^{n}\frac{1+\frac{k}{n}}{k-1}$$
defined for every integer $1\leq r\leq n-1$ and let
$\mathcal{M}(n)$ be the value for which the function $E^{P}_{n} $ reaches its
maximum. Then,
\begin{itemize}
\item[i)] $\displaystyle\lim_{n} {\mathcal{M}(n)}/{n}=\mu:= \frac{1}{2}W( 2 ) =0.42630\dots$.
\item[ii)] $\displaystyle \lim_{n}E^{P}_{n}(\mathcal{M}(n))=\displaystyle \lim_{n}E^{P}_{n}(\lfloor\mu n \rfloor)=\mu(1+\mu)= 0.608037\dots$.
\end{itemize}
\end{prop}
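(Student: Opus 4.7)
The plan is to follow the template used in the cost-version above: rewrite $E^{P}_{n}$ in closed form via the digamma function, rescale by $r = nx$ to obtain a sequence of continuous functions $g_{n}$ on $[0,1]$, show that $g_{n}$ converges uniformly to an explicit limit $g$, and invoke Proposition \ref{conv}. Splitting the sum as
\begin{equation*}
E^{P}_{n}(r) \;=\; \frac{r}{n}\sum_{k=r+1}^{n}\frac{1}{k-1} + \frac{r}{n^{2}}\sum_{k=r+1}^{n}\frac{k}{k-1}
\end{equation*}
and using the identities $\sum_{k=r+1}^{n}\tfrac{1}{k-1} = \psi(n)-\psi(r)$ and $\sum_{k=r+1}^{n}\tfrac{k}{k-1} = (n-r)+\bigl(\psi(n)-\psi(r)\bigr)$ gives the continuous real-variable extension
\begin{equation*}
E^{P}_{n}(r) \;=\; \frac{r(n+1)}{n^{2}}\bigl(\psi(n)-\psi(r)\bigr) + \frac{r(n-r)}{n^{2}}.
\end{equation*}

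Setting $g_{n}(x):=E^{P}_{n}(nx)$ and applying $\psi(n)-\psi(nx) = -\log x + O(1/n)$, one finds that $g_{n}\to g$ uniformly on $[0,1]$, with
\begin{equation*}
g(x) \;=\; -x\log x + x(1-x) \;=\; x\bigl(1-x-\log x\bigr),
\end{equation*}
and the convention $g(0)=0$. Then $g'(x) = -2x-\log x$ and $g''(x) = -2 - 1/x < 0$, so $g$ is strictly concave on $(0,1]$; combined with $g'(0^{+}) = +\infty$ and $g'(1)=-2<0$, this yields a unique global maximum $\mu\in(0,1)$ characterised by $\log\mu = -2\mu$, equivalently $2\mu\, e^{2\mu} = 2$, whence $\mu = W(2)/2$. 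Substituting $\log\mu = -2\mu$ back into $g(\mu) = \mu(1-\mu-\log\mu)$ gives $g(\mu) = \mu(1+\mu)$. Parts (i) and (ii) of the statement then follow from the corresponding items of Proposition \ref{conv}, and the equality $\lim_{n} E^{P}_{n}(\lfloor\mu n\rfloor) = \mu(1+\mu)$ follows from part (iii) applied to $\mathfrak{M}(n)=\lfloor\mu n\rfloor \sim \mathcal{M}(n)$.

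The only point requiring any care, exactly as in the cost-version, is the uniform convergence of $g_{n}$ to $g$ on the whole interval $[0,1]$, since $\psi$ has a pole at $0$. On any $[\varepsilon,1]$ the claim is immediate from the standard asymptotic expansion of the digamma function; near $x=0$, one uses $x\psi(nx) = -1/n + O(x)$ to see that $|g_{n}(x)-g(x)|$ remains $o(1)$ uniformly. The remainder of the argument is routine calculus, and I would expect the whole proof to occupy no more space than the one given for $E^{C}_{n}$.
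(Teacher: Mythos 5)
Your proposal is correct and follows essentially the same route as the paper: extending $E^{P}_{n}$ to a real variable via the digamma function (your closed form agrees exactly with the paper's $E^{P}_{n}(r)=\frac{r\left(n-r+(1+n)\psi(n)-(1+n)\psi(r)\right)}{n^{2}}$), rescaling to $g_{n}(x)=E^{P}_{n}(nx)$, establishing uniform convergence on $[0,1]$ to $g(x)=x\left(1-x-\log x\right)$, and applying Proposition \ref{conv}. The only difference is that you spell out the ``straightforward computations'' (concavity, the equation $\log\mu=-2\mu$ giving $\mu=W(2)/2$, and $g(\mu)=\mu(1+\mu)$) and the uniform-convergence check that the paper leaves implicit.
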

\begin{proof}
First, we extend $E^{P}_{n}$ to a real variable function by
$$E^{P}_{n}(r)=\frac{r\,\left(  n - r + \left(  1 + n \right)  \,\psi(n) - \left(  1+ n \right)  \,\psi(r) \right)  }{n^{2}}.$$
Now it can be seen that $g_{n}(x):=E^{P}_{n}(nx)$ converges uniformly in $[0,1]$ to $g(x):=-x\left(-1+x+\log(x)\right)$.
To conclude the proof it is enough to apply Proposition \ref{conv} together with some straightforward computations.
\end{proof}

This result means that the optimal strategy in this setting consists in rejecting roughly the first $\mu n$ interviewed candidates and then accepting the first candidate which is better than all the preceding ones. Following this strategy, the maximum expected payoff is asymptotically equal to $\mu^{2}+\mu$.

\section{The Best-or-Worst variant}

In this section we focus on the Best-or-Worst variant, as described in the introduction, in which the goal is to select either the best or the worst candidate, indifferent between the two cases. First of all we prove that, just like in the classic problem, the optimal strategy is a threshold strategy.

\begin{teor}\label{BWS}
For the Best-or-Worst variant, if $n$ is the number of objects, there exists $r(n)$ such
that the following strategy is optimal:
\begin{enumerate}
\item Reject the $r(n)$ first interviewed candidates.
\item After that, accept the first candidate which is either better or worse than all the preceding ones.
\end{enumerate}
\end{teor}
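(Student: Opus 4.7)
The plan is to follow the classical Dynkin--Lindley backward induction, adapted so that the quantity being tracked is the probability of selecting either the overall best or the overall worst. The first step would be to reduce the class of strategies worth considering: if the $k$-th applicant is neither the best nor the worst among the first $k$ interviewed, they can be neither the overall best nor the overall worst, so accepting them yields payoff $0$. Consequently we may restrict to strategies that act only on \emph{extremal} candidates, meaning those who, at the moment of their interview, are either strictly better than or strictly worse than all their predecessors.

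Next, I would introduce the value function $V_k$, defined as the optimal success probability given that the first $k-1$ applicants have already been rejected, with the terminal condition $V_{n+1}=0$. For $k\geq 2$, the probability that the $k$-th applicant is extremal is $2/k$, and the crucial symmetry at the heart of the proof is that the conditional success probability upon accepting such a candidate equals $k/n$ in \emph{both} the best-so-far and worst-so-far cases: a best-so-far candidate at step $k$ is the overall best with probability $k/n$ and cannot be the overall worst, and the worst-so-far case is entirely analogous. This yields the Bellman recursion
\[
V_k \;=\; \left(1-\frac{2}{k}\right) V_{k+1} \;+\; \frac{2}{k}\,\max\!\left(\frac{k}{n},\;V_{k+1}\right),
\]
so that upon seeing an extremal candidate at step $k$ the optimal decision is to accept if and only if $k/n \geq V_{k+1}$.

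The core of the argument is then the threshold property: if accepting is optimal at step $k$, it is also optimal at step $k+1$. I would prove this by contraposition. Suppose accepting is \emph{not} optimal at step $k+1$, i.e.\ $V_{k+2} > (k+1)/n$; then the Bellman recursion at $k+1$ collapses to $V_{k+1} = V_{k+2}$, and in particular $V_{k+1} > (k+1)/n > k/n$, so accepting is not optimal at step $k$ either. Iterating this implication backward, we obtain an integer $r(n)$ such that the optimal policy rejects every applicant $k \leq r(n)$ and accepts the first extremal applicant encountered afterwards, which is exactly the stated strategy. The only point at which the argument differs materially from the classical secretary problem is the verification of the aforementioned best/worst symmetry that collapses the two acceptance values into a single quantity $k/n$; once this is in hand, the threshold induction proceeds essentially as in Dynkin's original argument and presents no serious obstacle.
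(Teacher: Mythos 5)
Your proposal is correct and follows essentially the same route as the paper: both arguments restrict attention to candidates that are extremal (the paper calls them ``nice''), identify the acceptance value $k/n$ via the same best/worst symmetry, and set up the same Bellman recursion, with your contraposition step being just a repackaging of the paper's observation that the rejection value $P_{rej}(k)$ is non-increasing while $P_{acc}(k)=k/n$ is increasing, so the two cross exactly once. The only (shared) loose end is the boundary case $k=1$, where the acceptance value is $2/n$ rather than $k/n$, but this does not affect the threshold structure.
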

\begin{proof}
For the sake of brevity, a candidate which is either better or worse than all the preceding ones will be called a \emph{nice candidate}.

Since the game under consideration is finite, there must exist an optimal strategy (in the sense that it
maximizes the probability of success). Hence, we can define $P_{rej}(k)$ as
the probability of success following an optimal strategy when rejecting a
candidate in the $k$-th interview (regardless of its being a nice
candidate or not). We can also define $P_{acc}(k)$ as the probability of success
accepting a nice candidate in the $k$-th interview. Any optimal
strategy will reject any non-nice candidate since the probability of being a
successful choice will be $0$.

Probability $P_{acc}(k)$ is $k/n$, which
increases with $k$. On the other hand, the function $P_{rej}(k)$ is non-increasing because
$$P_{rej}(k)=p\cdot(\max\{P_{acc}(k+1),P_{rej}(k+1)\}+(1-p)P_{rej}(k+1)\geq
P_{rej}(k+1).$$
Thus, since $P_{acc}$ is increasing and $P_{rej}$ is
non-increasing and given that $P_{{acc}}(n)=1$ and $P_{rej}(n)=0$, there exists a
natural number $r(n)$ for which:
$$P_{acc}(k)<P_{rej}(k)\ \textrm{if}\ k\leq r(n),$$
$$P_{acc}(k)\geq P_{rej}(k)\ \textrm{if}\ k>r(n).$$

As a consequence of this fact, the following strategy must be optimal: for each $k$-th interview with $k\in\{1,\dots, n\}$ do the following:
\begin{itemize}
\item Reject the $k$-th candidate if $k\leq r(n)$ or if it is not a nice candidate.
\item Accept the $k$-th candidate if $k>r(n)$ and it is a nice candidate.
\end{itemize}
Note that the optimality of this strategy follows from the fact that, in each interview, we are choosing the action with greatest probability of success.
\end{proof}

Once that we have determined the optimal strategy, we focus on determining the probability of success in the $k$-th interview. To do so, let $n$ be the number of candidates and let us consider a cutoff value $r\in(1,n)$. If $k\in (r,n]$ is an integer, the probability of successfully selecting the best or the worst candidate in the $k$-th interview is  $\displaystyle P^{BW}_{n,r}(k)=\frac{2}{n}\frac{\binom{r}{2}}{\binom{k-1}{2}}$. Thus, the probability function of succeeding in the Best-or-Worst variant with $n$ candidates using $r$ as cutoff value, is given by
$$F^{BW}_{n}(r):=\sum_{k=r+1}^{n}P^{BW}_{n,r}(k)=\frac{2r(r-1)}{n}\sum_{k=r+1}^{n}\frac{1}{(k-1)(k-2)}=\frac{2r(n-r)}{n(n-1)},$$
where the last equality follows using telescopic sums.

\begin{rem}
Note that for $n>r\in\{0,1\}$, it is straightforward to see that the
probability of success is
$$F^{BW}_{n}(0)=F^{BW}_{n}(1)=\frac{2}{n}.$$
\end{rem}

The goal is now to determine the value of $r$ that maximizes the probability $F^{BW}_{n}$ (i.e., to determine the optimal cutoff value) and to compute this maximum probability. We do so in the following result.

\begin{teor}\label{BWP}
Given a positive integer $n>2$, let us consider the function
$$F^{BW}_{n}(r)=\frac{2r(n-r)}{n(n-1)}$$
defined for every integer $2\leq r\leq n-1$ and let $\mathcal{M}(n)$ be the
value for which the function $F^{BW}_{n}$ reaches its maximum. Then,
\begin{itemize}
\item[i)] $\mathcal{M}(n)=\lfloor n/2\rfloor$.
\item[ii)] The maximum value of $F^{BW}_{n}$ is:
$$F^{BW}_{n}(\mathcal{M}(n))=\frac{\lfloor\frac{1+n}{2}\rfloor
}{2\lfloor\frac{1+n}{2}\rfloor-1}=%
\begin{cases}
\frac{n}{2(n-1)}, & \text{if $n$ is even};\\
\frac{n+1}{2n}, & \text{if $n$ is odd}.
\end{cases}
$$
\end{itemize}
\end{teor}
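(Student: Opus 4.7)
The closed-form identity $F^{BW}_n(r) = \frac{2r(n-r)}{n(n-1)}$, obtained via telescoping just before the statement, reduces the theorem to an entirely elementary optimization. Since the denominator is independent of $r$, it suffices to maximize the concave quadratic $r \mapsto r(n-r)$ over integers in the admissible range. The unique real maximum of this quadratic sits at $r = n/2$, so by strict concavity the integer maximizer must lie at one (or two) of the integers closest to $n/2$.

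My plan is to split into two parity cases. If $n$ is even, then $n/2$ is itself an integer and equals $\lfloor n/2 \rfloor$; substitution gives $r(n-r) = n^{2}/4$, hence $F^{BW}_n(\mathcal{M}(n)) = \frac{n}{2(n-1)}$, and no other integer can tie by strict concavity. If $n$ is odd, the symmetry of the parabola about $n/2$ makes $r = (n-1)/2$ and $r = (n+1)/2$ both maximizers, each yielding $r(n-r) = (n^{2}-1)/4$; selecting $\mathcal{M}(n) = \lfloor n/2 \rfloor = (n-1)/2$ is a legitimate canonical choice and gives $F^{BW}_n(\mathcal{M}(n)) = \frac{n+1}{2n}$.

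To close, I would verify that the two answers are captured by the unified expression $\frac{\lfloor (1+n)/2 \rfloor}{2\lfloor (1+n)/2 \rfloor - 1}$: for even $n$ this reads $(n/2)/(n-1) = n/(2(n-1))$, and for odd $n$ it reads $((n+1)/2)/n = (n+1)/(2n)$, matching the two cases exactly.

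There is no genuine obstacle here; the nontrivial analytic work has already been done in the telescoping computation that produced the closed form. The only care required is the parity split, the observation that for odd $n$ the maximizer is non-unique (with $\lfloor n/2 \rfloor$ being one natural choice), and a small arithmetic check confirming that $\lfloor (1+n)/2 \rfloor$ is the uniform index that fuses both cases into a single formula.
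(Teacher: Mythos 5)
Your proof is correct and takes essentially the same route as the paper: both reduce to maximizing the concave quadratic $r(n-r)$ and then split on parity to verify the unified formula $\frac{\lfloor(1+n)/2\rfloor}{2\lfloor(1+n)/2\rfloor-1}$. The only difference is cosmetic --- the paper locates the maximizer via the forward difference $F^{BW}_{n}(r+1)-F^{BW}_{n}(r)=\frac{2}{n(n-1)}(n-2r-1)$ and takes the smallest $r$ where it becomes non-positive (which silently resolves the odd-$n$ tie between $(n-1)/2$ and $(n+1)/2$ that you handle explicitly), whereas you appeal to symmetry and strict concavity of the parabola.
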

\begin{proof}
\

\begin{itemize}
\item[i)] Since $F^{BW}_{n}(r)=-\frac{2}{n(n-1)}r^{2}+\frac{2}{(n-1)}r$ is the
equation of a parabola in the variable $r$, it is clear that
$$\mathcal{M}(n)=\min\left\{  r\in[2,n-1]:F^{BW}_{n}(r)\geq F^{BW}_{n}(r+1)\right\}.$$
Now,
$$F^{BW}_{n}(r+1)-F^{BW}_{n}(r)=\frac{2}{n(n-1)}(n-2r-1)$$
so it follows that
$$F^{BW}_{n}(r+1)-F^{BW}_{n}(r)\leq0\Leftrightarrow(n-2r-1)\leq0\Leftrightarrow r\geq
\frac{n-1}{2}.$$
Consequently,
$$\mathcal{M}(n)=\min\left\{  r\in[2,n-1]:r\geq\frac{n-1}{2}\right\}  =\lfloor
n/2\rfloor$$
as claimed.
\item[ii)] It is enough to apply the previous result.

If $n$ is even, then $n=2N$ and
$$F^{BW}_{n}(\mathcal{M}(n))=F^{BW}_{n}(N)=\frac{2N(n-N)}{n(n-1)}=\frac{2N^{2}}
{2N(2N-1)}=\frac{N}{2N-1}.$$
Moreover, in this case
$$\left\lfloor \frac{1+n}{2}\right\rfloor =\left\lfloor \frac{1+2N}
{2}\right\rfloor =N$$
so it follows that
$$F^{BW}_{n}(\mathcal{M}(n))=\frac{N}{2N-1}=\frac{\left\lfloor \frac{1+n}
{2}\right\rfloor }{2\left\lfloor \frac{1+n}{2}\right\rfloor -1}$$
as claimed.

Otherwise, if $n$ is odd, then $n=2N+1$ and
$$F^{BW}_{n}(\mathcal{M}(n))=F_{n}^{BW}(N)=\frac{2N(n-N)}{n(n-1)}=\frac{2N(2N+1-N)}
{(2N+1)2N}=\frac{N+1}{2N+1}.$$
In this case
$$\left\lfloor \frac{1+n}{2}\right\rfloor =\left\lfloor \frac{1+2N+1}
{2}\right\rfloor =N+1$$
so we also have that
$$F^{BW}_{n}(\mathcal{M}(n))=\frac{N+1}{2N+1}=\frac{\left\lfloor \frac{1+n}
{2}\right\rfloor }{2\left\lfloor \frac{1+n}{2}\right\rfloor -1}$$
and the proof is complete.
\end{itemize}
\end{proof}

This result means that, for $n>2$, optimal strategy in this setting consists in rejecting roughly the first $\lfloor\frac{n}{2}\rfloor$ interviewed candidates and then accepting the first candidate which is either better or worse than all the preceding ones. Following this strategy, the maximum probability of success is $\displaystyle\frac{\lfloor\frac{1+n}{2}\rfloor}{2\lfloor\frac{1+n}{2}\rfloor-1}$. In the cases $n\in\{1,2\}$, it is evident that an optimal cutoff value is $r=0$, i.e. to accept the first candidate that we consider The probability of success is 1 in both cases according to the fact that $F^{BW}_1(0)=F^{BW}_2(0)=1$.

\begin{rem}
Unlike in the classic secretary problem, the probability of success in the Best-or-Worst variant is not strictly increasing in $n$. In fact, we have that $F^{BW}_{2n}(\mathcal{M}(2n))=F^{BW}_{2n-1}(\mathcal{M}(2n-1))$ for every $n$.
\end{rem}

We are now going to consider the Best-or-Worst variant with the payoff function $p_C$ given in (\ref{pocost}); i.e., we assume that performing each interview has a constant cost of $1/n$. Under this assumption it can be proved that the optimal strategy is the same threshold strategy given in Theorem \ref{BWS}. Moreover, in this setting, the expected payoff with $n$ candidates and cutoff value
$r$ is given by
$$E_n^{BW,C}(r):=\sum_{k=r+1}^n\left(1-\frac{k}{n}\right)P^{BW}_{n,r}(k)=\frac{2r(r-1)}{n^2}\sum_{k=r+1}^{n}\frac{n-k}{(k-1)(k-2)}.$$
As usual, the goal is to determine the optimal cutoff value that maximizes the expected payoff $E^{BW,C}_{n}$ and to compute this maximum expected payoff. We do so in the following result.

\begin{teor}
Given an integer $n>1$, let us consider the function $E^{BW,C}_n(r)$ defined above for every integer
$1<r<n$ and let $\mathcal{M}(n)$ be the value for which the function $E^{BW,C}_n$ reaches its maximum. Also, let
$$\theta:=-\frac{1}{2W_{_{-1}}(-\frac{1}{2\sqrt{e}})}=e^{\frac{1}{2} + W_{-1}(\frac{-1}{2\,\sqrt{e}})}$$
be the solution to the equation $2x\log(x)=x-1$. Then, the following hold:
\begin{itemize}
\item[i)] $\displaystyle\lim_{n} {\mathcal{M}(n)}/{n}=\theta= 0.284668\dots$.
\item[ii)] $\displaystyle \lim_{n}E^{BW,C}_n( \mathcal{M}(n))=\displaystyle \lim_{n}E^{BW,C}_n(\lfloor n  \theta
\rfloor)=\theta(1-\theta)=0.2036321\dots$
\end{itemize}
\end{teor}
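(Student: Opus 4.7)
My plan follows the template of the two earlier propositions in Section~3: obtain a closed form for $E^{BW,C}_n$, compute the rescaled limit $g$, verify it has a unique global maximum on $[0,1]$, and apply Proposition~\ref{conv}. The first step is to collapse the inner sum. Writing $n-k=(n-1)-(k-1)$ and using the partial-fraction identity $\frac{1}{(k-1)(k-2)}=\frac{1}{k-2}-\frac{1}{k-1}$, the first piece telescopes to $\frac{1}{r-1}-\frac{1}{n-1}$ while the second piece gives $\psi(n-1)-\psi(r-1)$. Combining,
\[
E^{BW,C}_{n}(r)=\frac{2r(r-1)}{n^{2}}\left[\frac{n-1}{r-1}-1+\psi(r-1)-\psi(n-1)\right],
\]
which extends continuously to $[0,n]$ because the simple poles of $\psi(r-1)$ at $r=0,1$ are cancelled by the prefactor $r(r-1)$.

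Next I would set $g_n(x):=E^{BW,C}_n(nx)$ and use the asymptotic expansion $\psi(z)=\log z+O(1/z)$ to check that $g_n$ converges uniformly on $[0,1]$ to
\[
g(x):=2x-2x^{2}+2x^{2}\log x,
\]
with the convention $g(0)=0$. The logarithmic singularity at the origin is absorbed by the $2x^{2}$ prefactor and the digamma remainder is uniformly $O(1/n)$, so the uniform convergence should be routine.

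The last task is to locate the global maximum of $g$ on $[0,1]$. Differentiation gives $g'(x)=2(1-x+2x\log x)$, so critical points solve $2x\log x=x-1$. Setting $h(x):=2x\log x-x+1$, the function $h$ is convex ($h''(x)=2/x>0$) with $h(0^{+})=1$, $h(e^{-1/2})=1-2/\sqrt{e}<0$, and $h(1)=0$; hence $h$ has exactly two roots in $(0,1]$, a unique $\theta\in(0,e^{-1/2})$ and the endpoint $1$. Since $g''(x)=2+4\log x$ is negative at $\theta$ and positive at $1$, while $g(0)=g(1)=0$, the interior critical point $\theta$ is the unique global maximum of $g$. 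Multiplying the defining relation by $\theta$ yields
\[
g(\theta)=2\theta-2\theta^{2}+\theta(2\theta\log\theta)=2\theta-2\theta^{2}+\theta(\theta-1)=\theta(1-\theta).
\]
The substitution $x=e^{(v+1)/2}$ reduces $2x\log x=x-1$ to $(v/2)e^{v/2}=-1/(2\sqrt{e})$, which gives the two Lambert-$W_{-1}$ forms of $\theta$ stated in the theorem. Proposition~\ref{conv} parts~(i)--(iii) now deliver both claims, with part~(iii) applied to $\mathfrak{M}(n)=\lfloor n\theta\rfloor$. The only delicate point I foresee is the uniform convergence step near $x=0$, which hinges on the $x^{2}$ prefactor cancelling the logarithmic divergence of $\psi(nx-1)$.
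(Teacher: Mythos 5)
Your proposal is correct and follows essentially the same route as the paper's proof: a telescoping/digamma closed form for $E^{BW,C}_n$, uniform convergence of $g_n(x)=E^{BW,C}_n(nx)$ to $g(x)=2x\left(1-x+x\log x\right)$, and an application of Proposition \ref{conv}. Your closed form (written with $\psi(r-1)$, $\psi(n-1)$) coincides with the paper's (written with $\psi(r)$, $\psi(n)$) via the recurrence $\psi(z+1)=\psi(z)+1/z$; you merely spell out the ``straightforward computations'' (uniqueness of the maximum of $g$, the identity $g(\theta)=\theta(1-\theta)$, and the Lambert-$W$ form of $\theta$) that the paper leaves implicit.
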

\begin{proof}
First, observe that
\begin{align*}
E^{BW,C}_n(r)&=\frac{2r(r-1)}{n^{2}}\sum_{k=r+1}^{n}\frac{(n-k)}{(k-1)(k-2)}%
=\frac{2r(r-1)}{n^{2}}\left[  \frac{n-2}{r-1}-\frac{n-2}{n-1}-\sum_{i=r}%
^{n-1}\frac{1}{i}\right]\\
&  =2\frac{r}{n}\left(  1-\frac{2}{n}\right)  -2\frac{r}{n}\left(  \frac
{r}{n-1}-\frac{1}{n-1}\right)  -2\frac{r}{n}\left(  \frac{r}{n}-\frac{1}%
{n}\right)  \sum_{i=r}^{n-1}\frac{1}{i}.
\end{align*}
Now, we can extend $E^{BW,C}_n$ to a real variable function by
$$E^{BW,C}_n(r)=2\frac{r}{n}\left(  1-\frac{2}{n}\right)  -2\frac{r}{n}\left(  \frac
{r}{n-1}-\frac{1}{n-1}\right)  -2\frac{r}{n}\left(  \frac{r}{n}-\frac{1}%
{n}\right) (\psi(n)-\psi(r)).$$
Furthermore, it can be seen that the sequence of functions $g_n(x):=E^{BW,C}_n(nx)$ converges uniformly
in $[0,1]$ to the function $g(x)=2x\left(1-x+x\log x\right)$.

To conclude the proof it is enough to apply Proposition \ref{conv} together with some straightforward computations.
\end{proof}

\begin{rem}
The constant $\theta=-\frac{1}{2W_{-1}(-\frac{1}{2\sqrt{e}})}=0.284668\dots$
also appears related to rumour theory \cite{RUMOR,ru} and to Gabriel's Horn
(see A101314 in OEIS).
\end{rem}
%

Now, let us consider the Best-or-Worst variant with the payoff function $p_P$ given in (\ref{popay}); i.e., we assume that performing each interview has an additional payoff of $1/n$. Under this assumption, since the payoff increases with the number of interviews, it can be proved that the optimal strategy is again the same threshold strategy given in Theorem \ref{BWS}. Moreover, in this setting, the expected payoff with $n$ candidates and cutoff value
$r$ is given by
$$E_n^{BW,P}(r):=\sum_{k=r+1}^n\left(1+\frac{k}{n}\right)P^{BW}_{n,r}(k)=\frac{2r(r-1)}{n^2}\sum_{k=r+1}^{n}\frac{n+k}{(k-1)(k-2)}.$$
The optimal cutoff value that maximizes the expected payoff $E^{BW,P}_{n}$ and this maximum expected payoff are determined in following result.

\begin{teor}
Given an integer $n>1$, let us consider the function $E^{BW,P}_n(r)$ defined above for every integer $1<r<n$ and let $\mathcal{M}(n)$ be the value for which the function $E^{BW,P}_n$ reaches its
maximum. Also let
$$\vartheta:=\frac{1}{2\,W(\frac{e^{\frac{3}{2}}}{2})}=0.552001\dots$$
be the solution to the equation $1 - 3\,x - 2\,x\,\log(x)=0$. Then, the following hold:
\begin{itemize}
\item[i)] $\displaystyle\lim_{n} {\mathcal{M}(n)}/{n}=\vartheta$.
\item[ii)] $\displaystyle \displaystyle \lim_{n}E^{BW,P}_n(\mathcal{M}(n))=\lim_{n}E^{BW,P}_n(\lfloor n\vartheta \rfloor)=\vartheta(1+\vartheta)=0.8567\dots$
\end{itemize}
\end{teor}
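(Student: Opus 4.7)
The plan is to follow exactly the template used for the cost variant: extend $E^{BW,P}_n$ to a continuous function of a real variable, rescale by setting $r=nx$, and invoke Proposition \ref{conv}. The partial-fraction identity
\[
\frac{n+k}{(k-1)(k-2)} \;=\; \frac{n+2}{(k-1)(k-2)} + \frac{1}{k-1}
\]
splits the inner sum into a telescoping part equal to $\tfrac{1}{r-1}-\tfrac{1}{n-1}$ and a harmonic-type part $\sum_{i=r}^{n-1}\tfrac{1}{i}$. Extending the latter to $\psi(n)-\psi(r)$ yields the real-variable formula
\[
E^{BW,P}_n(r) \;=\; 2\frac{r}{n}\left(1+\frac{2}{n}\right) - \frac{2r(r-1)(n+2)}{n^2(n-1)} + \frac{2r(r-1)}{n^2}\bigl(\psi(n)-\psi(r)\bigr).
\]

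Substituting $r=nx$ and using the standard asymptotic $\psi(n)-\psi(nx)=-\log x+O(1/n)$ (uniformly on compact subsets of $(0,1]$) leads to the candidate limit
\[
g(x) \;:=\; 2x\bigl(1 - x - x\log x\bigr),
\]
extended continuously by $g(0)=0$. Pointwise convergence is immediate on $(0,1]$. The step I expect to be the principal nuisance is uniform convergence near $x=0$, where $\psi(nx)$ is singular; I would handle it by splitting $[0,1]$ into $[0,1/n]$ and $[1/n,1]$, using the uniform estimate $\psi(y)=\log y+O(1/y)$ for $y\ge 1$ on the second region, while on the first the prefactor $r(r-1)/n^2\approx x^2$ kills the singularity because both $x^2\log(nx)$ and $x/n$ are $O(1/n)$ there.

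Once uniform convergence is established, differentiation gives $g'(x)=2(1-3x-2x\log x)$. Since $g''(x)=-10-4\log x$ changes sign exactly once in $(0,1)$ (at $x=e^{-5/2}$), while $g'(0^+)=2>0$ and $g'(1)=-4<0$, the derivative $g'$ has a unique zero $\vartheta\in(0,1)$, and that zero is the unique global maximum of $g$ on $[0,1]$. The substitution $x=1/(2u)$ converts $1-3x-2x\log x=0$ into $u\,e^u=e^{3/2}/2$, producing the Lambert-$W$ expression $\vartheta=1/(2W(e^{3/2}/2))$. Finally, using the defining relation $2\vartheta\log\vartheta=1-3\vartheta$ to replace the logarithmic term in $g(\vartheta)=2\vartheta-2\vartheta^2-2\vartheta^2\log\vartheta$ yields $g(\vartheta)=\vartheta(1+\vartheta)$. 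Parts i) and ii) of Proposition \ref{conv} then give both claims of the theorem, with part iii) of that proposition covering the replacement of $\mathcal{M}(n)$ by $\lfloor n\vartheta\rfloor$.
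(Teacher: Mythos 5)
Your proposal is correct and follows essentially the same route as the paper: split $\frac{n+k}{(k-1)(k-2)}$ into a telescoping part plus a harmonic part, extend via the digamma function, establish uniform convergence of $g_n(x):=E^{BW,P}_n(nx)$ to $g(x)=2x\left(1-x-x\log x\right)$, and then apply Proposition \ref{conv} together with the calculus (which you carry out in full: uniqueness of the maximizer, the Lambert-$W$ identity, and the simplification $g(\vartheta)=\vartheta(1+\vartheta)$). One remark in your favor: your positive sign on the $\psi(n)-\psi(r)$ term is the correct one, whereas the paper's displayed formula carries a minus sign there --- an apparent typo, since only the plus sign is consistent with the paper's own stated limit $g(x)=-2x\left(-1+x+x\log x\right)$.
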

\begin{proof}
First, observe that
\begin{align*}
E^{BW,P}_n(r)  &  =\frac{2r(r-1)}{n^{2}}\sum_{k=r+1}^{n}\frac{(n+k)}{(k-1)(k-2)}=\\
&  =2\frac{r}{n}\left(  1+\frac{2}{n}\right)  -2\frac{r}{n}\frac{r-1}%
{n}\left(  1+\frac{3}{n-1}\right)  -2\frac{r}{n}\frac{r-1}{n}\sum_{i=r}%
^{n-1}\frac{1}{i}.
\end{align*}
Now, we can extend $E^{BW,P}_n$ to a real variable function by
$$E^{BW,P}_n(r)=2\frac{r}{n}\left(  1+\frac{2}{n}\right)  -2\frac{r}{n}\frac{r-1}
{n}\left(  1+\frac{3}{n-1}\right)  -2\frac{r}{n}\frac{r-1}{n}(\psi(n)-\psi(r)).$$
Furthermore, it can be seen that the sequence of functions $g_{n}(x):=E^{BW,P}_n(nx)$ converges uniformly on $[0,1]$ to $g(x)=-2x\left(-1+x+x\log x\right)$.

To conclude the proof it is enough to apply Proposition \ref{conv} together with some straightforward computations.
\end{proof}

So far, we have considered the Best-or-Worst variant in which the goal is to select either the best or the worst candidate, indifferent between the two cases. To finish this section we are going to further modify the Best-or-Worst variant. In particular we are going to consider different payoff depending on whether we select the best or the worst candidate. In paticular we are going to consider the following payoff function, with $m<M$.
\begin{equation}\label{poun}
p_U(k)=\begin{cases} m, & \textrm{if the $k$-th candidate is the worst candidate};\\
M, & \textrm{if the $k$-th candidate is the best candidate};\\
0, & \textrm{otherwise}.
\end{cases}
\end{equation}

In this new setting the optimal strategy has two thresholds, as stated in the following result, whose proof is analogue to that of Theorem \ref{BWS}.

\begin{teor}
For the Best-or-Worst variant, if $n$ is the number of candidates and the payments for selecting the worst and the best candidates are, respectively, $m<M$, there exist
$r(n)\leq s(n),$ such that the following strategy is optimal:
\begin{enumerate}
\item Reject the $r(n)$ first interviewed candidates.
\item Accept the first candidate which is better than all the preceding ones until reaching the $s(n)$-th candidate.
\item After that, accept the first candidate which is either better or worse than all the preceding ones.
\end{enumerate}
\end{teor}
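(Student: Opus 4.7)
The plan is to mimic the proof of Theorem \ref{BWS}, but replacing the single ``accept a nice candidate'' action by two separate actions: accepting a best--so--far candidate and accepting a worst--so--far candidate. Because these two actions now yield different expected payoffs, the analysis naturally produces two thresholds instead of one, and the ordering $r(n)\leq s(n)$ will drop out of the inequality $M>m$.

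First I would introduce three value functions on $\{1,\dots,n\}$. Let $P_B(k)$ be the expected payoff obtained by accepting the $k$-th candidate conditional on its being the best among the first $k$, let $P_W(k)$ be the analogous quantity for the worst so far, and let $P_{rej}(k)$ be the optimal expected payoff starting from a rejection at step $k$. A standard counting argument (identical to the one implicitly used for $P_{acc}$ in Theorem \ref{BWS}) gives $P_B(k)=Mk/n$ and $P_W(k)=mk/n$, both strictly increasing in $k$. For $P_{rej}$, the dynamic programming recursion
$$P_{rej}(k)=\tfrac{1}{k+1}\max\{P_B(k+1),P_{rej}(k+1)\}+\tfrac{1}{k+1}\max\{P_W(k+1),P_{rej}(k+1)\}+\tfrac{k-1}{k+1}P_{rej}(k+1),$$
combined with $\max\{\cdot,P_{rej}(k+1)\}\geq P_{rej}(k+1)$, shows that $P_{rej}$ is non-increasing. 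As in Theorem \ref{BWS}, a candidate that is neither best nor worst so far will always be rejected, since its acceptance yields payoff $0$.

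Next I would define
$$r(n):=\max\bigl\{k\in\{0,\dots,n-1\}:P_B(k)<P_{rej}(k)\bigr\},\qquad s(n):=\max\bigl\{k\in\{0,\dots,n-1\}:P_W(k)<P_{rej}(k)\bigr\},$$
both well defined because $P_{rej}(n)=0<m\leq M=P_B(n)$ (and $=P_W(n)$ in the second case). The inequality $M>m$ gives $P_B(k)>P_W(k)$ for every $k\geq 1$, so $P_W(k)\geq P_{rej}(k)$ forces $P_B(k)>P_{rej}(k)$; combined with the monotonicity of all three functions, this yields $r(n)\leq s(n)$. The prescribed strategy then picks at each interview the action of greatest expected payoff: reject for $k\leq r(n)$; accept only a best--so--far candidate for $r(n)<k\leq s(n)$; accept any nice candidate for $k>s(n)$. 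Its optimality follows exactly as at the end of Theorem \ref{BWS}.

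The main obstacle is conceptual rather than computational: one must recognise that the hypothesis $M>m$ splits the single acceptance threshold of Theorem \ref{BWS} into two nested thresholds, and verify that the natural ordering of the regions $\{P_B\geq P_{rej}\}$ and $\{P_W\geq P_{rej}\}$ gives precisely the three--phase strategy claimed. No new analytical estimates beyond those in Theorem \ref{BWS} are required.
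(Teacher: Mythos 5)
Your proof is correct and follows exactly the route the paper intends: the paper gives no written proof for this theorem, stating only that it is ``analogue to that of Theorem \ref{BWS}'', and your argument is precisely that analogue carried out in detail --- splitting $P_{acc}$ into $P_B(k)=Mk/n$ and $P_W(k)=mk/n$, showing $P_{rej}$ is non-increasing via the same dynamic-programming recursion, and reading off the two nested thresholds from $M>m$. The only points you share with the paper's level of informality (the boundary case $k=1$, where the first candidate is simultaneously best and worst so far, and the empty-set convention in the definitions of $r(n)$ and $s(n)$) are harmless and do not affect the argument.
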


Now, let $n$ be the number of candidates and let us consider cutoff values $1<r<s<n$. Then, if $k\in (r,n]$ is an integer, the probability of successfully selecting the best candidate in the $k$-th interview is given by
$$P^{BW,U}_{n,r,s}(k)=\begin{cases}
\frac{r}{(k-1) n}, & \textrm{if $r<k<s$};\\
\frac{r}{k-1}\frac{s-1}{k-2}\frac{1}{n}, & \textrm{if $k\geq s$}.
\end{cases}$$

On the other hand, if $k\in (r,n]$ is an integer, the probability of successfully selecting the best or the worst candidate in the $k$-th interview is given by
$$\overline{P}^{BW,U}_{n,r,s}(k)=\begin{cases}
0, & \textrm{if $r<k<s$};\\
\frac{r}{k-1}\frac{s-1}{k-2}\frac{1}{n}, & \textrm{if $k\geq s$}.
\end{cases}$$
Because, according to the optimal strategy we can only select the worst candidate if $k\geq s$.

Consequently, the expected payoff with $n$ candidates and cutoff values $r<s$ is given by
\begin{align*}
E^{BW,U}_{n}(r,s)&:=\sum_{k=r+1}^n MP^{BW,U}_{n,r,s}(k)+m\overline{P}^{BW,U}_{n,r,s}(k)=\\&=\sum_{k=r+1}^{s}\frac{M\,r}{\left(k-1\right)
\,n}+\sum_{k=s+1}^{n}\left(M+m\right)  \frac{r(s-1)}{(k-1)(k-2)n}.
\end{align*}

The following result determines the cutoff values as well as the corresponding maximum expected payoff.

\begin{teor}\label{TnM}
Given a positive integer $n>2$, let us consider the function
$E^{BW,U}_{n}(r,s)$
defined above for every pair of integers in the set $\{(r,s)\in\mathbb{Z}^{2}:0\leq r\leq s<n\}$ and let $(\mathcal{M}_{1}(n),\mathcal{M}_{2}(n))$ be the point for which $E_n^{BW,U}$ reaches its maximum. Then,
\begin{itemize}
\item[i)] $\displaystyle \lim_{n}\frac{\mathcal{M}_1(n)}{n}=\frac{e^{-1+\frac{n}{M}}M}{m+M}.$
\item[ii)] $\displaystyle \lim_{n}\frac{\mathcal{M}_2(n)}{n}=\frac{M}{m+M}.$
\item[iii)] $\displaystyle \lim_{n}E^{BW,U}_{n}(\mathcal{M}_{1}(n),\mathcal{M}_{2}(n))=\frac{e^{-1+\frac{n}{M}}M^2}{m+M}.$
\end{itemize}
\end{teor}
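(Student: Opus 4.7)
The plan is to invoke the two-variable Proposition~\ref{conv2}. First, I would put $E^{BW,U}_n(r,s)$ into a form that extends continuously to real variables and admits a clean asymptotic. The inner sum over $k\geq s+1$ telescopes by partial fractions to $\frac{1}{s-1}-\frac{1}{n-1}$, while the sum over $r+1\leq k\leq s$ equals $\psi(s)-\psi(r)$, yielding
$$E^{BW,U}_n(r,s)=\frac{Mr}{n}\bigl(\psi(s)-\psi(r)\bigr)+(M+m)\frac{r}{n}\left(1-\frac{s-1}{n-1}\right).$$

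Next, I would substitute $(r,s)=(nx,ny)$ and, using the asymptotic $\psi(nz)=\log(nz)+O(1/n)$, show that $h_n(x,y):=E^{BW,U}_n(nx,ny)$ converges, uniformly on $T=\{(x,y):0\leq x\leq y\leq 1\}$, to
$$h(x,y):=Mx\log(y/x)+(M+m)x(1-y),$$
extended continuously to $x=0$ by $h(0,y):=0$. Uniform convergence away from the $x=0$ edge is routine, since $|\psi(nx)-\log(nx)|=O(1/(nx))$ on any strip $x\geq\varepsilon$. Near that edge, the uniform bound $|h_n(x,y)|\leq C\,x\,(1+|\log x|)$ (and the analogous bound for $h$) forces both quantities to vanish simultaneously as $x\to 0$. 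This boundary estimate is the principal technical obstacle of the proof.

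Third, I would locate the unique global maximum of $h$ on $T$ by first-order conditions. Solving $\partial h/\partial y=Mx/y-(M+m)x=0$ forces $y=M/(m+M)$; substituting into $\partial h/\partial x=M\log(y/x)-M+(M+m)(1-y)=0$ yields $\log(y/x)=1-m/M$ and therefore $x=\frac{M}{m+M}\,e^{m/M-1}$. A Hessian check confirms a strict interior local maximum, and comparison with the three boundary faces, namely $x=0$ (on which $h\equiv 0$), $y=1$ (where $h(x,1)=-Mx\log x$ attains $M/e$), and the diagonal $x=y$ (where $h(t,t)=(M+m)t(1-t)$ attains $(M+m)/4$), confirms that this critical point is the unique global maximum on $T$ for $m<M$, the key scalar inequality $4M^{2}e^{m/M-1}>(M+m)^{2}$ reducing to a standard one-variable convexity argument. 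The stationarity relation also rewrites as $M\log(y/x)=M-(M+m)(1-y)$, so substituting back produces the clean identity $h(\theta_{1},\theta_{2})=M\theta_{1}$. Proposition~\ref{conv2} then delivers (i), (ii), and (iii) simultaneously.
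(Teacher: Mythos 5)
Your proposal is correct and follows essentially the same route as the paper: both compute the same limit function $h(x,y)=Mx\log(y/x)+(M+m)x(1-y)$, establish uniform convergence of $h_n(x,y)=E^{BW,U}_n(nx,ny)$ on $T$, and apply Proposition~\ref{conv2} at the same critical point $\left(\frac{M}{m+M}e^{m/M-1},\frac{M}{m+M}\right)$ --- and your derivation also confirms that the exponent $e^{-1+\frac{n}{M}}$ in the statement of Theorem~\ref{TnM} is a typo for $e^{-1+\frac{m}{M}}$, which is what the paper's own proof uses. The only real difference is how unique global maximality of the critical point is certified: the paper notes that $h$ is concave on all of $T$ (its Hessian has determinant $(M+m)\left(\frac{2M}{y}-(M+m)\right)>0$ precisely because $m<M$ forces $y\leq 1<\frac{2M}{M+m}$), whereas you check the Hessian only at the critical point and then compare its value against the three boundary faces $x=0$, $y=1$ and $x=y$; both arguments are valid, yours being slightly more laborious but making explicit where the hypothesis $m<M$ enters through the inequality $e^{u}>1+u$.
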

\begin{proof}
Let us define the sequence of functions $\{h_n\}$ by $h_n(x,y)=E^{BW}_n(nx,xy)$. Then,
$$
\lim_{n}h_n(x,y)=h(x,y)=
\begin{cases}
(M+m)x-(M+m)xy+Mx\log(y/x), & \textrm{if $x,y\neq0$};\\
0 & \textrm{otherwise}.
\end{cases}
$$
and the convergence is uniform on $T:=\{(x,y)\in\mathbb{R}^{2}:0\leq x\leq y\leq1\}$.

Hence, we can apply Proposition \ref{conv2}. To do so, observe that $h$ is a concave function on the convex set $T$ with a negative definite hessian matrix. Since $h$ has only one critical point, namely
$$\left(\frac{e^{-1+\frac{m}{M}}M}{M+m},\frac{M}{M+m}\right)$$
and
$$h\left(\frac{e^{-1+\frac{m}{M}}M}{M+m},\frac{M}{M+m}\right)=\frac{e^{-1+\frac{m}{M}}M^{2}}{M+m}$$
the result follows.
\end{proof}

This result means that the optimal strategy in this setting consists in rejecting roughly the first $n\dfrac{e^{-1+\frac{m}{M}}M}{M+m}$ interviewed candidates, then accepting the first candidate
which is better than all the preceding ones until reaching roughly the $n\dfrac{M}{M+m}$ candidate and, finally accepting the first candidate which is either better or worse than all the preceding ones. Following this strategy, the maximum expected payoff is asymptotically equal to $\displaystyle\frac{e^{-1+\frac{m}{M}}M^{2}}{M+m}$.

\begin{rem}
If $m\ll M$ the cuttof values obtained in Theorem \ref{TnM} are, approximately, $ne^{-1}$ and $n$. This means that the optimal strategy ignores the objective of obtaining the worst candidate and we recover the original secretary problem. In addition, if $m=M$, then both cutoff values coincide with $n/2$ and we recover the original Best-or-Worst variant.
\end{rem}

\section{The Postdoc variant}

In this section we focus on the Postdoc variant, as described in the introduction, in which the goal is to select the second best candidate. First of all we have to prove that, just like in classic problem, the optimal strategy is a threshold strategy.

In this variant it is not obvious that the optimal strategy has only one threshold. This is because the candidate considered in a given interview could be selected both if it is better or the second better than all the preceding ones and in both cases it could end up being the second best candidate. However, we are going to see that selecting a candidate which is better than all the preceding ones is never preferable to waiting for a candidate which is the second better than all the preceding ones.

Assume for a moment that we are following a threshold strategy. Let $n$ be the number of candidates and let us consider a cutoff value $r\in(1,n)$. If $k\in (r,n]$ is an integer, the probability of successfully selecting the best or the worst candidate in the $k$-th interview is $P^{PD}_{n,r}(k)=\frac{r}{k-1}\frac{1}{k}\frac{\binom{k}{2}}{\binom{n}{2}}$. Thus, the probability function of succeeding in the Postdoc variant with $n$ candidates using $r$ as cutoff value and provided we are following a threshold strategy for the second best candidate, is given by
$$F^{PD}_{n}(r):=\sum_{k=r+1}^{n}P^{PD}_{n,r}(k)=\sum_{k=r+1}^{n}\frac{r\,{\binom{k}{2}}}{\left(  -1+k\right)  \,k\,{\binom{n}{2}}}.$$
Note that the following holds:
\begin{align*}
F^{PD}_{n}(r):=&=\sum_{k=r+1}^{n}\frac{r\,{\binom{k}{2}}}{\left(  -1+k\right)  \,k\,{\binom
{n}{2}}}=\frac{r\,{\binom{r+1}{2}}}{\left(  -1+r+1\right)  \,(r+1)\,{\binom
{n}{2}}}+\sum_{k=r+2}^{n}\frac{r\,{\binom{k}{2}}}{\left(  -1+k\right)  \,k\,{\binom
{n}{2}}}=\\
&= \frac{ \,{\binom{r+1}{2}}}{  \,(r+1)\,{\binom
{n}{2}}}+\sum_{k=r+2}^{n}\frac{(r+1)r\,{\binom{k}{2}}}{\left(  -1+k\right)  \,k\,{(r+1)\binom
{n}{2}}}=\\
&= \frac{ \,{\binom{r+1}{2}}}{  \,(r+1)\,{\binom
{n}{2}}}+\frac{r}{r+1}\sum_{k=r+2}^{n}\frac{(r+1) \,{\binom{k}{2}}}{\left(  -1+k\right)  \,k\,{ \binom
{n}{2}}}=\\
&= \frac{ \,{\binom{r+1}{2}}}{  \,(r+1)\,{\binom
{n}{2}}}+ \frac{r}{r+1}F^{PD}_{n}(r+1).
\end{align*}
On the other hand, let us denote by $T_n(r)$ the probability of success after the $r$-th interview provided we have already selected a candidate which is better than all the preceding ones. Then, the probability of finding the second best candidate in the $(r+1)$-th interview is $\frac{1}{r+1}$ and, furthermore, the probability of not finding a better candidate among all the remaining interviews is $\frac{\binom{r+1}{2}}{\binom{n}{2}}$. On the other hand, the probability of not obtaining the second best candidate in the $(r+1)$-th interview is $\frac{r}{r+1}$ and the probability of success in this case will be $T_n(r+1)$. Hence,
$$T_n (r)=\frac{1}{r+1}\frac{\binom{r+1}{2}}{ \binom{n}{2}}+\frac{r}{r+1}T_n(r+1).$$
Thus, we have seen that $T_n(r)$ and $F^{PD}_{n}(r) $ both satisfy the same recurrence relation in $r$. Moreover, it holds that $T_n(n-1)=F^{PD}_{n}( n-1)=1/n$ so, consequently, we obtain that $T_n(r)=F^{PD}_{n}(r) $ for every $r<n$.

Note that this means that the optimal strategy can neglect if a given candidate is better than all the preceding ones and focus only on whether the candidate is the second better than all the preceding ones and thus the optimal strategy has only one threshold.

\begin{teor}\label{TEORPDC}
For the Postdoc variant, if $n$ is the number of candidates, there exists $r(n)$ such
that the following strategy is optimal:
\begin{enumerate}
\item Reject the $r(n)$ first interviewed candidates.
\item After that, accept the first candidate which is the second best until then.
\end{enumerate}
\end{teor}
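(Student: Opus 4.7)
The plan is to mirror the proof of Theorem~\ref{BWS}, exploiting the key reduction that has already been carried out in the discussion preceding the theorem: namely, the identity $T_n(r)=F_n^{PD}(r)$ shows that accepting a best-so-far candidate at position $r$ yields exactly the same success probability as rejecting them and proceeding optimally from position $r+1$. Hence there is an optimal strategy that never selects a best-so-far candidate and only ever considers accepting a second-best-so-far candidate (call such a candidate \emph{relevant}). The problem is thereby reduced to one in which the only meaningful action at each interview is to accept or reject a relevant candidate, and this brings us back to the one-threshold framework.

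With this reduction in hand, I would follow the template of Theorem~\ref{BWS} verbatim. Define $P_{acc}(k)$ as the probability of overall success when accepting a relevant candidate at position $k$, and $P_{rej}(k)$ as the probability of success under an optimal strategy after rejecting the candidate at position $k$. A direct counting argument (the $k$-th candidate has overall rank $2$ given they are second-best of the first $k$ iff the overall best lies among the first $k-1$ positions) gives
\[
P_{acc}(k)=\frac{\binom{k}{2}}{\binom{n}{2}},
\]
which is strictly increasing in $k$, with $P_{acc}(n)=1$. On the other hand, the standard recursion
\[
P_{rej}(k)=q_k\max\{P_{acc}(k+1),P_{rej}(k+1)\}+(1-q_k)P_{rej}(k+1),
\]
where $q_k$ is the probability that the $(k+1)$-th candidate is relevant, immediately yields $P_{rej}(k)\geq P_{rej}(k+1)$, so that $P_{rej}$ is non-increasing. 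Together with $P_{rej}(n)=0$, this guarantees the existence of a value $r(n)\in\{1,\dots,n\}$ such that $P_{acc}(k)<P_{rej}(k)$ for $k\leq r(n)$ and $P_{acc}(k)\geq P_{rej}(k)$ for $k>r(n)$.

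From here the argument concludes exactly as in Theorem~\ref{BWS}: at each interview, the action with greater probability of success is taken, which is \emph{reject} when $k\leq r(n)$ (since relevant candidates at these positions are worse than waiting, and non-relevant candidates should never be accepted by the reduction above), and \emph{accept if and only if the candidate is relevant} when $k>r(n)$. This is precisely the threshold strategy stated in the theorem.

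The main obstacle is really conceptual rather than computational, and it is precisely what the long computation preceding the theorem statement already resolves: one must rule out the possibility that the optimal strategy alternates between accepting best-so-far and second-best-so-far candidates depending on $k$. Once the identity $T_n(r)=F_n^{PD}(r)$ collapses this two-action structure to a single one, the rest is a cosmetic adaptation of the Best-or-Worst proof, and no further difficulty arises.
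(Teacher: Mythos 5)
Your proposal is correct and takes essentially the same route as the paper: the paper's proof of this theorem is literally ``just use the same ideas as in Theorem~\ref{BWS}'', resting---exactly as you do---on the preceding identity $T_n(r)=F^{PD}_n(r)$ to rule out ever accepting a best-so-far candidate, and then on the monotone crossing of the increasing $P_{acc}$ with the non-increasing $P_{rej}$. One minor slip worth fixing: given that the $k$-th candidate is second best of the first $k$, having overall rank $2$ is equivalent to \emph{both} of the top two overall candidates lying among the first $k$ positions (whence $P_{acc}(k)=\binom{k}{2}/\binom{n}{2}$), not to the overall best lying among the first $k-1$ positions---that condition is necessary but not sufficient and would instead give $(k-1)/n$---so your parenthetical justification is off even though the formula you actually use is the correct one.
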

\begin{proof}
Just use the same ideas as in Theorem \ref{BWS}.
\end{proof}

 Thus, the probability function of succeeding in the Postdoc variant with $n$ candidates using $r$ as cutoff value, is given by
$$F^{PD}_{n}(r):=\sum_{k=r+1}^{n}P^{PD}_{n,r}(k)=\frac{r(n-r)}{n(n-1)}.$$

Observe that we have obtained that $F_n^{PD}(r)=\dfrac{1}{2}F_n^{BW}(r)$. Consequently, if we follow the previous strategy, the optimal cutoff value is the same as in the Best-or-Worst variant; i.e., $\lfloor \frac{n}{2}\rfloor$) and the maximum probability of success is one half of the maximum probability of success in the Best-or-Worst variant (see Theorem \ref{BWP}).

We are now going to consider the Postdoc variant with the payoff function $p_C$ given in (\ref{pocost}); i.e., we assume that performing each interview has a constant cost of $1/n$. Under this assumption it can be proved that the optimal strategy has two thresholds.

\begin{teor}\label{PDP}
For the Postdoc variant, if $n$ is the number of candidates and if the payoff function is given by (\ref{pocost}), there exist $r(n)\leq s(n)$, such that the following strategy is optimal:
\begin{enumerate}
\item Reject the $r(n)$ first interviewed candidates.
\item Accept the first candidate which is better than all the preceding ones until reaching the $s(n)$-th candidate.
\item After that, accept the first candidate which is either better or second better than all the preceding ones.
\end{enumerate}
\end{teor}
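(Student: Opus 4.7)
The plan is to mirror the approach in the proof of Theorem \ref{BWS}, but with two ``nice'' candidate types to track rather than one. I would begin by setting up a Bellman recursion: let $V(k)$ denote the optimal expected payoff assuming no candidate has been accepted through position $k-1$ and we are about to interview candidate $k$. By symmetry the candidate at $k$ is best-so-far with probability $1/k$ and second-best-so-far with probability $1/k$; otherwise its overall rank is at least $3$, so it can never be the second-best overall and must be rejected. Writing
\begin{equation*}
A_B(k):=\Bigl(1-\tfrac{k}{n}\Bigr)\tfrac{k(n-k)}{n(n-1)},\qquad A_S(k):=\Bigl(1-\tfrac{k}{n}\Bigr)\tfrac{k(k-1)}{n(n-1)}
\end{equation*}
for the expected payoffs of accepting a best-so-far or second-best-so-far candidate at position $k$ (obtained by multiplying the cost-discounted payoff $1-k/n$ by the conditional probability that the candidate is the overall second-best), the recursion reads
\begin{equation*}
V(k)=\tfrac{1}{k}\max(A_B(k),V(k+1))+\tfrac{1}{k}\max(A_S(k),V(k+1))+\tfrac{k-2}{k}V(k+1),
\end{equation*}
with $V(n+1)=0$. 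The identity $V(k)-V(k+1)=\tfrac{1}{k}[(A_B(k)-V(k+1))_{+}+(A_S(k)-V(k+1))_{+}]\geq 0$ shows that $V$ is non-increasing, in direct analogy with the monotonicity of $P_{rej}$ in the proof of Theorem \ref{BWS}.

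The optimal decision at $k$ is then to accept a best-so-far candidate iff $A_B(k)\geq V(k+1)$ and to accept a second-best-so-far candidate iff $A_S(k)\geq V(k+1)$. Writing $\mathcal{A}_B$ and $\mathcal{A}_S$ for the resulting acceptance sets, the theorem amounts to showing that both are suffix intervals of $\{2,\dots,n\}$ and that $\mathcal{A}_S\subseteq \mathcal{A}_B$; one then takes $r(n):=\min\mathcal{A}_B-1$ and $s(n):=\min\mathcal{A}_S-1$. The inclusion $\mathcal{A}_S\subseteq\mathcal{A}_B$ follows in large part from the identity $A_B(k)-A_S(k)=(1-k/n)\,k(n-2k+1)/(n(n-1))$, which is non-negative for $k\leq(n+1)/2$: in this range, $A_S(k)\geq V(k+1)$ immediately yields $A_B(k)\geq V(k+1)$. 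For $k>(n+1)/2$, where $A_S>A_B$, a separate tail estimate suffices: bounding $V(k+1)$ above by the value of the greedy strategy that accepts any nice candidate from position $k+1$ onwards shows that $V(k+1)\leq A_B(k)$ in this regime.

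The main obstacle is the suffix-interval property of $\mathcal{A}_B$ and $\mathcal{A}_S$. Unlike in Theorem \ref{BWS}, where the single acceptance value was monotonic in $k$, here $A_B$ and $A_S$ are cubics in $k$ (peaking near $n/3$ and $2n/3$ respectively), so the sign of $A_X(k)-V(k+1)$ cannot be tracked from monotonicity alone. The key idea is to use the recursion to bound the decrement $V(k)-V(k+1)$ inside either acceptance region: there it is comparable to $A_B(k)+A_S(k)-2V(k+1)$, forcing $V$ to fall at least as fast as $A_B$ or $A_S$ past their peaks. A backward induction on $k$ starting from $k=n-1$ (where $V(n)=0$ and both $A_B(n-1), A_S(n-1)>0$) then shows that each of the inequalities $A_B(k)\geq V(k+1)$ and $A_S(k)\geq V(k+1)$, once satisfied, remains so for every larger $k<n$.
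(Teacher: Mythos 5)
Your dynamic-programming setup is correct and considerably more explicit than the paper's own proof (which is the single sentence ``Proceed as in Theorem \ref{BWS} with each threshold separately''): the recursion for $V$, the acceptance values $A_B(k)$ and $A_S(k)$, the monotonicity identity for $V$, and the reduction of the theorem to ``$\mathcal{A}_B$ and $\mathcal{A}_S$ are suffixes with $\mathcal{A}_S\subseteq\mathcal{A}_B$'' are all right, and you correctly diagnose that the argument of Theorem \ref{BWS} does not transfer verbatim because $A_B$ and $A_S$ are unimodal rather than increasing. The trouble is that neither of the two devices you propose to bridge this actually works. The tail estimate is stated backwards: the value of the greedy strategy from position $k+1$ onwards is a \emph{lower} bound for the optimal continuation value $V(k+1)$, never an upper bound; asserting $V(k+1)\leq V_{\mathrm{greedy}}(k+1)$ is the same as asserting that greedy is optimal in the tail, which is part of the theorem being proved, so the step as written is circular. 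Nor can it be repaired by the crude bound $V(k+1)\leq\max_{j>k}\max(A_B(j),A_S(j))$: away from the very end of the sequence that maximum is at least $A_S(k+1)$, which exceeds $A_B(k)$ precisely in the regime $k>(n+1)/2$ you are treating. A genuine proof must run the backward induction so that it simultaneously establishes greedy's optimality on $\{j,\dots,n\}$ and the explicit inequality $\min\bigl(A_B(j-1),A_S(j-1)\bigr)\geq V_{\mathrm{greedy}}(j)$, using the closed form $V_{\mathrm{greedy}}(j)=\frac{(j-1)(j-2)}{n^2}\sum_{k\geq j}\frac{n-k}{(k-1)(k-2)}$; that quantitative work is exactly what your sketch omits.

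The mechanism you offer for the suffix property fails as well. In the both-accept region one has $V(k)-V(k+1)=\frac{1}{k}\bigl[A_B(k)+A_S(k)-2V(k+1)\bigr]$, and the claim that this forces $V$ to fall at least as fast as $A_B$ past its peak translates, in scaled variables $x=k/n$ with $v$ the scaled continuation value, into $x(1-x)(2-3x)\geq 2v(x)$; for $x\geq 2/3$ the left-hand side is nonpositive while $v(x)>0$, so the inequality is false (the analogous inequality for $A_S$, namely $x(1+x-3x^2)\geq 2v(x)$, likewise fails for $x$ near $0.9$). What actually keeps the acceptance sets suffixes in that range is not monotonicity of the margins $A_B(k)-V(k+1)$ and $A_S(k)-V(k+1)$ --- these need not be monotone --- but the fact that the margins stay bounded away from zero: in the greedy region $v(x)=x(1-x+x\log x)$, and $g_B(x)-v(x)=x^2(x-1-\log x)>0$ by the elementary inequality $\log x\leq x-1$. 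Establishing that slack rigorously is, once again, the missing upper bound on $V$. So your proposal correctly identifies the obstacle that the paper glosses over, but the two key steps meant to overcome it are, respectively, logically inverted and demonstrably false as stated.
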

\begin{proof}
Proceed as in Theorem \ref{BWS} with each threshold separately.
\end{proof}

Under this strategy, the probability of successfully selecting the second best candidate in the $k$-th interview is given by the function
$$P^{PD,C}_{n,r,s}(k)=\begin{cases}
\frac{r(n-k)}{n(n-1)(k-1)}, & \textrm{if $r<k<s$};\\
\frac{r(s-1)(n-k)}{n(n-1)(k-1)(k-2)}+\frac{r(s-1)}{n(n-1)(k-2)}, & \textrm{if $k\geq s$}.
\end{cases}$$

Consequently, the expected payoff with $n$ candidates and cutoff values $r<s$ is given by
$$E^{PD,C}_{n}(r,s)=\sum_{k=r+1}^n \left(1-\frac{k}{n}\right)P^{PD,C}_{n,r,s}(k).$$
In the following result we determine the optimal cutoff values and the maximum expected payoff.

\begin{teor}
Given a positive integer $n>2$ let us consider the function
$E^{PD,C}_{n}(r,s)$ defined above for every $(r,s)\in\{(r,s) \in\mathbb{Z}^{2}:0\leq r\leq s<n\}$ and let $(\mathcal{M}_{1}
(n),\mathcal{M}_{2}(n))$ be the point for which $E^{PD,C}_{n}$ reaches its maximum. Then,
\begin{itemize}
\item[i)] $\displaystyle \lim_{n}  {\mathcal{M}_1(n)}/{n}=0.17248\dots$
\item[ii)] $\displaystyle \lim_{n}  {\mathcal{M}_2(n)}/{n}=0.39422\dots$
\item[iii)] $\displaystyle \lim_{n}E^{PD,C}_{n}(\mathcal{M}_{1}(n),\mathcal{M}_{2}(n))=0.11811\dots$
\end{itemize}
\end{teor}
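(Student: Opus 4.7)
The plan is to mirror the strategy used in the proof of Theorem~\ref{TnM}, culminating in an application of Proposition~\ref{conv2}. Concretely, I would put $E^{PD,C}_n(r,s)$ in closed form via the digamma function, extend it to real arguments, rescale by setting $h_n(x,y):=E^{PD,C}_n(nx,ny)$, show that $\{h_n\}$ converges uniformly on $T:=\{(x,y)\in\mathbb{R}^2:0\leq x\leq y\leq 1\}$ to an explicit limit $h$, and verify that $h$ has a unique global maximum $(\theta_1,\theta_2)\in T$ with the claimed numerical values.

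To obtain the closed form I would split the defining sum at $k=s$. For $r<k<s$ the identity $\tfrac{n-k}{k-1}=\tfrac{n-1}{k-1}-1$ converts the sum into a digamma difference $\psi(s)-\psi(r+1)$ plus a linear tail. For $k\geq s$ the partial fraction $\tfrac{1}{(k-1)(k-2)}=\tfrac{1}{k-2}-\tfrac{1}{k-1}$ reduces each of the two summands appearing in $P^{PD,C}_{n,r,s}(k)$ to digamma differences plus telescoping remainders. Reassembling these pieces and multiplying by $(1-k/n)$ produces an expression of the schematic form
\[E^{PD,C}_n(r,s)=P_n(r/n,s/n)+Q_n(r/n,s/n)\bigl(\psi(n)-\psi(s)\bigr)+R_n(r/n,s/n)\bigl(\psi(n)-\psi(r)\bigr),\]
where $P_n,Q_n,R_n$ are polynomials with bounded coefficients, and the extension to real arguments is then automatic.

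With this in hand, the uniform limit $h(x,y)=\lim_n h_n(x,y)$ on $T$ is obtained from the asymptotic $\psi(n)-\psi(nx)=-\log x+O(1/n)$, uniform on compact subsets of $(0,1]$, together with a direct estimate near $x=0$ exploiting the prefactor $r/n=x$ vanishing linearly. The limit $h$ is an elementary polynomial-logarithmic function of $x$ and $y$, continuous on $T$ and vanishing on $\{x=0\}$. I would then compute $\nabla h$, obtain a system of two transcendental equations whose unique interior solution $(\theta_1,\theta_2)$ yields the stated numerical values, and confirm global maximality either by checking that the Hessian is negative definite at $(\theta_1,\theta_2)$ (as in the proof of Theorem~\ref{TnM}) or by a boundary comparison on $T$. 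Proposition~\ref{conv2} then delivers the three claimed limits at once.

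The main obstacle is the bookkeeping in the first step: $P^{PD,C}_{n,r,s}(k)$ for $k\geq s$ combines two distinct mechanisms for selecting the $k$-th candidate as the second best overall, so the associated sum breaks into several pieces whose simplification via partial fractions and telescoping must be performed carefully to avoid sign errors. A secondary difficulty is that the critical point equations for $h$ are transcendental with no elementary closed form, so the constants $0.17248\dots$, $0.39422\dots$ and $0.11811\dots$ can only be identified numerically; this does not, however, affect the qualitative verification (uniqueness of the interior critical point and dominance over the boundary of $T$) needed to invoke Proposition~\ref{conv2}.
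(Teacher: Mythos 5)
Your proposal follows essentially the same route as the paper's proof: a closed form for $E^{PD,C}_n(r,s)$ in terms of the digamma function, extension to real arguments, rescaling to $h_n(x,y)=E^{PD,C}_n(nx,ny)$, uniform convergence on $T$ to a polynomial--logarithmic limit $h$, location of its unique maximum via the (transcendental) critical point equations, and an appeal to Proposition~\ref{conv2}. The details you outline (partial fractions, telescoping, digamma asymptotics, Hessian/boundary check) are exactly the ``straightforward'' computations the paper leaves implicit, so the plan is sound and matches the published argument.
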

\begin{proof}
First of all, observe that
\begin{align*}
E^{PD,C}_{n}(r,s)&=\frac{r}{n^{2}}\left(  n+\frac{n-1}{s-1}-s+\frac{\left(
s-r\right)  \,\left(  3-4\,n+r+s\right)  }{2\,\left(  n-1\right)  }\right)  +\\ &
+\frac{r}{n^{2}}\left(  \left(  1-s\right)  \,\psi(-1+n)-\left(  n-1\right)
\,\psi(r)+\left(  n-2+s\right)  \,\psi(s-1)\right).
\end{align*}
Thus, if we define the sequence of functions $\{h_n\}$ by $h_n(x,y)=E^{PD,C}_n(nx,ny)$, it follows that
$$\lim_{n}h_n(x,y)=h(x,y):=\begin{cases}
\frac{x\left(2-6y+y^{2}+4x-x^{2}+2(1+y)\log
y-2\,\log x\right)}{2}, & \textrm{if $x,y\neq0$};\\
0, & \textrm{otherwise}.
\end{cases}$$
and the convergence is uniform on $\{(x,y)\in\mathbb{R}^{2}:0\leq x\leq y\leq1\}$.

Using elementary techniques we get that $h$ reaches its absolute maximum at the point $(\alpha,\beta)$ with $\beta:=0.39422\dots$ is the solution to $-2+\frac{1}{\beta}+\beta+\log(\beta)=0$ and $\alpha:=0.1724844\dots$ is the solution to $1-\frac{1}{\beta}-2\,\beta-\frac{\beta^{2}}{2}+4\,\alpha-\frac{3\,\alpha^{2}}{2}-\log(\alpha)=0$.

The fact that $h(\alpha,\beta)=0.11811\dots$ concludes the proof.
\end{proof}

Finally, let us consider the Postdoc variant with the payoff function $p_P$ given in (\ref{popay}); i.e., we assume that performing each interview has an additional payoff of $1/n$. Under this assumption, it is clear that no optimal strategy will accept a candidate which is better than all the preceding ones because, if the search continues, the probability of success is the same and the payoff will be greater. Hence, we must only consider strategies with one threshold for the second best candidate, as in Theorem \ref{TEORPDC}, ignoring if the interviewed candidate in better than the preceding ones. In this setting, the expected payoff with $n$ candidates and cutoff value $r$ is given by
$$E_n^{PD,P}(r):=\sum_{k=r+1}^n\left(1+\frac{k}{n}\right)P^{PD}_{n,r}(k)=\frac{r(n-r)(3n+1+r)}{2n^2(n-1)}.$$
The optimal cutoff value that maximizes the expected payoff $E^{PD,P}_{n}$ and this maximum expected payoff are determined in the following result.

\begin{teor}
Given an integer $n>1$, let us consider the function $E_n^{PD,P}(r)$ defined above for every integer $1<r<n$ and let $\mathcal{M}(n)$ be the value for
which the function $E_n^{PD,P}$ reaches its maximum. Then, the following hold:
\begin{itemize}
\item[i)] $\displaystyle \lim_{n}\frac{\mathcal{M}(n)}{n}=\frac{\sqrt{13}-2}{3}=0.53518\dots$
\item[ii)] $\displaystyle\lim_{n}E_n^{PD,P}(\mathcal{M}(n))=\frac{13\sqrt{13}-35}{27}=0.4397\dots$
\end{itemize}
\end{teor}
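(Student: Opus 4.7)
The plan is to apply Proposition \ref{conv} essentially as in the two previous propositions of Section 3. The situation here is much simpler, however, because the closed form
$$E_n^{PD,P}(r)=\frac{r(n-r)(3n+1+r)}{2n^2(n-1)}$$
is already polynomial in $r$, so no digamma function appears and the extension to a real variable is automatic.

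First I would set $g_n(x):=E_n^{PD,P}(nx)$ and rewrite it by pulling the powers of $n$ out of each factor, producing
$$g_n(x)=\frac{n}{2(n-1)}\,x(1-x)\!\left(3+x+\frac{1}{n}\right).$$
From this expression it is immediate that $g_n\to g$ uniformly on $[0,1]$, where
$$g(x)=\frac{1}{2}x(1-x)(3+x),$$
because $g_n-g$ is a polynomial of fixed degree whose coefficients are $O(1/n)$, so the sup-norm on $[0,1]$ is $O(1/n)$.

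Next I would locate the unique global maximum of $g$ in $[0,1]$. Since $g(0)=g(1)=0$ and $g>0$ on $(0,1)$, the maximum is attained at the unique critical point of $g$ in the interior, which is the positive root of $g'(x)=\frac{1}{2}(-3x^2-4x+3)=0$, namely $\theta=\frac{\sqrt{13}-2}{3}$. Proposition \ref{conv}(i) then yields statement (i), and Proposition \ref{conv}(ii) reduces statement (ii) to evaluating $g(\theta)$. Using $1-\theta=\frac{5-\sqrt{13}}{3}$ and $3+\theta=\frac{7+\sqrt{13}}{3}$, a routine expansion with surds gives $g(\theta)=\frac{13\sqrt{13}-35}{27}$.

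I do not foresee any genuine obstacle. The heavy lifting is already done by the telescoping that collapses $\sum_{k}(1+k/n)P^{PD}_{n,r}(k)$ into the displayed cubic; from there the argument reduces to one invocation of Proposition \ref{conv}, a quadratic formula, and an arithmetic simplification with surds. The only mildly careful step is the exact evaluation of $g(\theta)$, which is pure bookkeeping.
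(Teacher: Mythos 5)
Your proof is correct, but it takes a genuinely different route from the paper's. The paper exploits the fact that $E_n^{PD,P}(r)$ is a cubic polynomial in $r$: differentiating and solving the resulting quadratic in $r$ gives the \emph{exact} maximizer
$$\mathcal{M}(n)=\frac{-1-2\,n+\sqrt{1+7\,n+13\,n^{2}}}{3},$$
from which both limits follow immediately. You instead invoke the general asymptotic machinery of Proposition \ref{conv}: rescale to $g_n(x)=E_n^{PD,P}(nx)=\frac{n}{2(n-1)}x(1-x)\left(3+x+\frac{1}{n}\right)$, verify uniform convergence to $g(x)=\frac{1}{2}x(1-x)(3+x)$ (your $O(1/n)$ coefficient argument is fine; explicitly, $g_n(x)-g(x)=\frac{x(1-x)(4+x)}{2(n-1)}$), and maximize the limit function. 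Your identification of the unique global maximum $\theta=\frac{\sqrt{13}-2}{3}$ and the surd computation $g(\theta)=\frac{13\sqrt{13}-35}{27}$ both check out. What the paper's route buys is exact finite-$n$ information: the closed-form $\mathcal{M}(n)$ is precisely what powers the remark following the theorem, where the expansion $\mathcal{M}(n)=\frac{\sqrt{13}-2}{3}n+\frac{7-2\sqrt{13}}{6\sqrt{13}}+\cdots$ lets one verify that rounding gives the true optimal integer cutoff for all $n$ up to $10000$; that refinement is not recoverable from a purely asymptotic argument. What your route buys is uniformity of method with the rest of the paper and robustness: it never needs the maximizer in closed form, so it would survive payoff perturbations for which the derivative is no longer solvable by the quadratic formula.
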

\begin{proof}
Since $E_n^{PD,P}$ is a degree 3 polynomial, we can explicitly obtain the exact value of $\mathcal{M}(n)$ by elementary methods. Namely,
$$\mathcal{M}(n)=\frac{-1-2\,n+\sqrt{1+7\,n+13\,n^{2}}}{3}.$$
The result follows immediately.
\end{proof}

\begin{rem}
Note that we can further refine the previous result by noting that $\displaystyle \mathcal{M}(n)=\left(\frac{\sqrt{13}-2}{3}\right)n+\frac{7-2\sqrt{13}}{6\sqrt{13}}+o(n)$. In this case, $[\mathcal{M}(n)]$ is the optimal cutoff value for all $n$ up to 10000, without any exception.
\end{rem}

\section{Conclusions}

In this paper, we have analyzed two variants of the secretary problem which happen to be closely related: the Postdoc and the Best-or-Worst variants. Both of them have the same optimal threshold strategy and the mean payoff for the first one is twice as for the second one.

We now show a comparative table of the asymptotic optimal cutoff value (ACV)
given by $\displaystyle \lim_{n} \mathcal{M}(n)/n$ and the the asymptotic maximum expected payoff (AMP) in the classical
secretary problem, in the Best-or-Worst variant and in the Postdoc variant with payoff functions $p_B$, $p_C$ and $p_P$. In the case of the Postdoc variant with payoff function $p_P$, in the cell corresponding to $\mathcal{M}(n)/n$ we show the two thresholds related to the optimal strategy in that setting.

\[%
\begin{tabular}
[c]{|c|c|c|c|c|c|c|}\hline
{\footnotesize Payoff} & \multicolumn{2}{|c|}{Classic} &
\multicolumn{2}{|c|}{Best-or-Worst} & \multicolumn{2}{|c|}{Postdoc}%
\\\cline{2-7}
& $\text{ACV}$ & $\text{AMP}$ & $\text{ACV}$ & $\text{AMP}$ &
$\text{ACV}$ & $\text{AMP}$\\\hline
$p_B$ & $e^{-1}$ & $e^{-1}$ & $1/2$ & 1/2 & $1/2$ & 1/4\\\hline
$p_C$ & $%
\begin{array}
[c]{c}%
\rho\simeq\\
0.2031
\end{array}
$ & $%
\begin{array}
[c]{c}%
\rho-\rho^{2}\simeq\\
0.1619
\end{array}
$ & $%
\begin{array}
[c]{c}%
\theta\simeq\\
0.2846
\end{array}
$ & $%
\begin{array}
[c]{c}%
\theta-\theta^{2}\simeq\\
0.2036
\end{array}
$ & $%
\begin{array}
[c]{c}%
0.1724,\\
0.3942
\end{array}
$ & $0.1181$\\\hline
$p_P$ & $%
\begin{array}
[c]{c}%
\eta\simeq\\
0.4263
\end{array}
$ & $%
\begin{array}
[c]{c}%
\eta^{2}+\eta\simeq\\
0.6080
\end{array}
$ & $%
\begin{array}
[c]{c}%
\vartheta\simeq\\
0.5520
\end{array}
$ & $%
\begin{array}
[c]{c}%
\vartheta^{2}+\vartheta\simeq\\
0.8567
\end{array}
$ & $%
\begin{array}
[c]{c}%
\frac{\sqrt{13}-2}{3}\simeq\\
0.5351
\end{array}
\,$ & $%
\begin{array}
[c]{c}%
\frac{13\sqrt{13}-35}{27}\\
\simeq0.4397
\end{array}
$\\\hline
\end{tabular}
\ \
\]


\begin{thebibliography}{99}                                                                                               %

\bibitem{1} M. Babaioff, N. Immorlica and R. Kleinberg. \newblock Matroids, secretary problems, and online mechanisms. \newblock {\em Proc. SODA}. 434-443, 2007.

\bibitem {KK}J.N. Bearden. \newblock A new secretary problem with rank-based
selection and cardinal payoffs. \newblock {\em Journal of Mathematical
Psychology}, 50: 58-59. 2006.

\bibitem {RUMOR}D.J. Daley and D.G. Kendall. \newblock Stochastic rumours.
\newblock {\em Journal of the Institute of Mathematics and Its Applications}, 1:42-55. 1965.

\bibitem {48}E.B. Dynkin. \newblock The optimum choice of the instant for
stopping a markov process. \newblock {\em Soviet Mathematics - Doklady}, 4:627-629, 1963.

\bibitem {FER}T.S. Ferguson. \newblock Who solved the secretary problem?
\newblock {\em Statistical Science}, 4(3):282-296. 1989.

\bibitem {fergu}T.S. Ferguson. \newblock The Best-Choice Problems with Dependent
Criteria. \newblock {\em Contemporary Mathematics}. 25:135-151. 1992.

\bibitem {FER2}T. S. Ferguson, J. P. Hardwick and M. Tamaki. \newblock Maximizing the duration of owning a relatively best object.
\newblock {\em Contemporary Mathematics: Strategies for Sequential Search and Selection in Real Time, American
Mathematics Association  (T. Ferguson and S. Samuels, eds)}, 125:37-58. 1991.

\bibitem{poset2} R. Freij and J. Wastlund. \newblock Partially ordered secretaries,
\newblock {\em Electron. Comm. Probab.}, 15:504-507. 2010.

\bibitem{garrod} B. Garrod and R. Morris. \newblock The secretary problem on an unknown poset
\newblock {\em Random Structures Algorithms}, 43(4):429-451. 2012.

\bibitem{poset1} N. Georgiou, M. Kuchta, M. Morayne and J. Niemiec. \newblock On a universal best choice algorithm for partially ordered sets.
\newblock {\em Random Structures Algorithms}, 32:263-273. 2008.

\bibitem {gil}J. Gilbert and F. Mosteller. \newblock Recognizing the maximum
of a sequence. \newblock {\em J. Am. Statist. Assoc.}, 61:35-73, 1966.

\bibitem {ru}E. Lebensztayn, F. P. Machado and P. M. Rodriguez.
\newblock Limit Theorems for a general sthochastic rumour model.
\newblock {\em arxiv.org/pdf/1003.4995}.

\bibitem {101} D.V. Lindley. \newblock Dynamic programming and decision theory. \newblock {\em Journal of the Royal Statistical Society. Series C (Applied Statistics)}, 10(1):39-51, 1961.


\bibitem {rose}J.S. Rose. \newblock A Problem of Optimal Choice and
Assignment \newblock {\em Operations Research}. 30(1):172-181. 1982.

\bibitem{soto} J.A. Soto. \newblock Matroid secretary problem in the random assignment model. \newblock {\em Proc. SODA}. 1275-1284, 2011.

\bibitem {aesima}K. Szajowski. \newblock Optimal choice problem of a-th
object. \newblock {\em Matem. Stos.} 19:51-65, 1982.

\bibitem {2009} K.A. Szajowski. \newblock A rank-based selection with cardinal
payoffs and a cost of choice. \newblock {\em Sci. Math. Jpn.} 69(2):285-293. 2009.

\bibitem {posdoc} R.J. Vanderbei. \newblock The postdoc variant of the secretary problem \newblock {\em http://www.princeton.edu/$\sim$rvdb/tex/PostdocProblem/PostdocProb.pdf (unpublished)}. 1983.

\end{thebibliography}
\end{document}